\newtheorem{theorem}{Theorem} 
\newtheorem{lemma}{Lemma}
\newtheorem{corollary}{Corollary}
\newenvironment{proof}[1][Proof]{\begin{trivlist}
\item[\hskip \labelsep {\bfseries #1}]}{\end{trivlist}}
\newcommand{\qed}{\nobreak \ifvmode \relax \else
      \ifdim\lastskip<1.5em \hskip-\lastskip
      \hskip1.5em plus0em minus0.5em \fi \nobreak
      \vrule height0.30em width0.4em depth0.25em\fi}
\newcommand*{\sign}{\mathop{\mathrm{sign}}\nolimits}
 \author{Safari Mukeru and Mmboniseni P. Mulaudzi\\
\footnotesize{\em Department of Decision Sciences}\\
\footnotesize{University of South Africa, P. O. Box 392, Pretoria, 0003. South Africa}\\
\footnotesize{e-mails: mukers@unisa.ac.za, mulaump@unisa.ac.za}}
\title{{Zeros of Gaussian power series, Hardy spaces and \\ determinantal point processes}}
\date{}
\begin{document}

\maketitle

\begin{center} \texttt{This paper is dedicated to the memory of President John Pombe Joseph Magufuli.} \end{center}

\pagenumbering{arabic}

\begin{abstract}
Given a sequence $(\xi_n)$ of standard i.i.d complex Gaussian random variables, Peres and Vir\'ag (in the paper ``Zeros of the i.i.d. Gaussian power series: a conformally invariant determinantal process'' {\it Acta Math.} (2005) 194, 1-35) discovered the striking fact that the zeros of the random power series $f(z) = \sum_{n=1}^\infty \xi_n z^{n-1}$ in the complex unit disc $\mathbb{D}$ constitute a determinantal point process. The study of the zeros of the general random series 
 $f(z)$ where the restriction of independence is relaxed upon the random variables $(\xi_n)$ is an important open problem. This paper proves that if $(\xi_n)$  is an infinite sequence of complex Gaussian random variables such that their covariance matrix is invertible and its inverse is a Toeplitz matrix, then the zero set of $f(z)$ constitutes a determinantal point process with the same distribution as the case of i.i.d variables studied by Peres and Vir\'ag. The arguments are based on some interplays between Hardy spaces and reproducing kernels. Illustrative examples are constructed from classical Toeplitz matrices and the classical fractional Gaussian noise.

 \end{abstract}
{\bf Key words:}  Gaussian power series, Hardy spaces, Toeplitz matrices, determinantal point process, reproducing Hilbert spaces \\

\section{Introduction}

Given a sequence of independent and identically distributed standard complex Gaussian random variables $(\xi_n)$, consider the Gaussian power series 
   $f(z) = \sum_{n=1}^\infty \xi_n z^{n-1}$ defined in the open unit disc $\mathbb{D} = \{z \in \mathbb{C}: |z| < 1\}$ and the zero set of $f(z)$, that is, 
$$\mathscr{Z} = \{z \in \mathbb{D}: f(z) = 0\}.$$
The zero set $\mathscr{Z}$ constitutes a point process on $\mathbb{D}$. The joint intensity $p$ of the process $\mathscr{Z}$ is defined as
   $$p(z_1,z_2, \ldots, z_n) = \lim_{\epsilon\to 0} \frac{\mathbb{P}_{\epsilon}(z_1, z_2, \ldots, z_n)}{\pi^n \epsilon^{2n}}$$ where 
$\mathbb{P}_\epsilon(z_1, z_2, \ldots, z_n)$ is the probability that simultaneously for all $1\leq i \leq n$, the function $f(z)$ has a zero in the disc of centre $z_i$ and radius $\epsilon>0$. 
Recently, Peres and Vir\'ag \cite{Peres_Virag} obtained the striking fact that the point process $\mathscr{Z}$ is a determinantal process. In fact, they proved that, for all $z_1, z_2, \ldots, z_n$ in $\mathbb{D}$, 
         $$p(z_1, z_2, \ldots, z_n) = \det\left(\frac{1}{\pi(1 - z_k \overline{z_j})^2}\right)_{k,j=1}^n.$$
That is, 
            $$p(z_1, z_2, \ldots, z_n) = \det\left(K(z_k, z_j)\right)_{1\leq k,j\leq n}$$  where $K(z, w) = \pi^{-1}(1 - z \overline{w})^{-2}$ is the classical Bergman kernel in $\mathbb{D}$. (A thorough discussion on determinantal point processes can be found in the book by Hough et al. \cite{Hough}.)
The study of the zeros of the general random series 
 $f(z) = \sum_{n=1}^\infty \xi_n z^{n-1}$ where the restrictions of independence and identical distribution are relaxed upon the random variables $(\xi_n)$ is an important open problem. This paper considers this question in the following context: {\it Determine sequences of dependent Gaussian random variables $(\xi_n)$ such that the zero set of the random series $f(z) = \sum_{n=1}^\infty \xi_n z^{n-1}$ is a determinantal point process as in the case of i.i.d random variables.} We recall that a matrix $(a_{k,j})$ is called a Toeplitz matrix if $a_{k,j}$ depends only on the difference $k-j$, that is, for all $k,j$ and for any integer $\ell$ such that $a_{k+\ell, j+\ell}$ is defined, $a_{k+\ell, j+\ell} = a_{k,j}.$ 
We consider a complex infinite Toeplitz matrix $G$ that is hermitian and positive definite and we  assume that $G$ admits a classical inverse in the sense that there exists a hermitian positive definite matrix $G^{-1}$ such that $G G^{-1} = G^{-1} G = I$. Since $G$ is positive definite and Toeplitz, then there exits a positive definite function $\gamma$ on the integers such that $G_{k,j} = \gamma(k-j) = {\overline{\gamma(j-k)}}$ for all $k,j$. (We shall assume without loss of generality that $G_{k,k} = \gamma(0) = 1$ for all $k$.)
Then by the classical Bochner theorem, one can associate to $G$ a (unique) probability measure $\mu$ on the unit circle $\mathbb{T}$ such that 
\begin{eqnarray*} 
\gamma(n) = \int_{\mathbb{T}} e^{2\pi i n \theta} d\mu(\theta),\,\, \mbox{ for all } n \in \mathbb{Z}.
\end{eqnarray*}
We shall assume throughout that the probability measure $\mu$ satisfies the following condition: 
{\bf Condition (C):} The measure $\mu$ is absolutely continuous and its density $\varphi$ is strictly positive almost everywhere on $\mathbb{T}$ with respect to the Lebesgue measure on $\mathbb{T}$.  

We now consider a discrete-time complex Gaussian process $(\xi_n)_{n\in \mathbb{N}}$ with zero mean, covariance matrix  $G^{-1}$ and zero pseudo-covariance matrix, that is, for all $n,m\in \mathbb{N}$,
      $$\mathbb{E}(\xi_n) = 0, \,\,\,\mathbb{E}(\xi_n \overline{\xi_m}) = \left(G^{-1}\right)_{n,m} 
\mbox{    and   } \mathbb{E}(\xi_n \xi_m)  = 0.$$
The existence of such process $(\xi_n)$ is classical (see for example Miller \cite{Miller} and references therein.) In the case where $G$ (and hence $G^{-1})$ is a real matrix one can simply take a real Gaussian process $(\zeta_n)$ of covariance matrix $G^{-1}$ and write $\xi_n = (\zeta_n + i \zeta_n')/\sqrt{2}$ where $(\zeta_n')$ is an independent copy of $(\zeta_n)$.
We shall consider the Gaussian analytic function 
         $$f(z) = \sum_{n=1}^\infty \xi_n z^{n-1},\,\, z\in \mathbb{D}.$$
Our main finding is that  the zero set of the Gaussian analytic function $f(z)$
is a determinantal point process governed by the Bergman kernel just as it is for the case of i.i.d random variables.         
The main result of this paper is the following.
 \begin{theorem}\label{mainth} 
 Let $G$ be an infinite, invertible, hermitian and positive definite Toeplitz matrix such that the associated probability measure $\mu$ satisfies condition $(C)$ and the inverse $G^{-1}$ is such that
\begin{eqnarray} \label{sdse34rfs}
\sup_{n,m} |(G^{-1})_{n,m}| < \infty.
\end{eqnarray}
If $(\xi_n)_{n \in \mathbb{Z}}$ is a centred complex Gaussian process with covariance matrix $G^{-1}$ and zero pseudo-covariance matrix, then the  zero set of the Gaussian analytic function
$$f(z) = \sum_{n=1}^\infty \xi_n z^{n-1},\,\, z\in \mathbb{D}$$
  is a determinantal point process governed by the Bergman kernel. That is the joint intensity $p$ of the zeros of $f(z)$ is given by
       $$p(z_1, z_2, \ldots, z_n) =  \det\left(\frac{1}{\pi(1 - z_k \overline{z_j})^2}\right)_{1\leq k,j\leq n},\,\,\, z_1, z_2, \ldots, z_n \in \mathbb{D}.$$ 
\end{theorem}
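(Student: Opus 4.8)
The plan is to reduce everything to the i.i.d.\ case of Peres and Vir\'ag by exploiting the fact that the law of the zero set of a Gaussian analytic function is a \emph{gauge invariant} of its covariance structure. Since $(\xi_n)$ is centred with zero pseudo-covariance, $f$ is a proper (circularly symmetric) complex Gaussian analytic function, and its finite-dimensional distributions — hence the law of its zero set — are completely determined by the Hermitian covariance kernel
$$K_f(z,w) = \mathbb{E}\left[f(z)\overline{f(w)}\right] = \sum_{n,m=1}^\infty (G^{-1})_{n,m}\, z^{n-1}\bar w^{m-1}.$$
If $h$ is any deterministic function analytic and non-vanishing on $\mathbb{D}$, then $g := h f$ has exactly the same zeros as $f$ and is again a proper Gaussian analytic function (its pseudo-covariance stays zero since $h$ is deterministic), now with covariance $\mathbb{E}[g(z)\overline{g(w)}] = h(z)\overline{h(w)} K_f(z,w)$. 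Thus it suffices to produce such an $h$ for which $h(z)\overline{h(w)} K_f(z,w)$ equals the Szeg\H{o} kernel $(1-z\bar w)^{-1}$, because that is precisely the covariance of the i.i.d.\ series $\sum \xi_n z^{n-1}$, whose zero set Peres and Vir\'ag proved to be the Bergman determinantal process.

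First I would identify $K_f$ with a reproducing kernel. By Bochner's theorem the Toeplitz matrix $G$ is the Gram matrix of the monomials $\{z^{n-1}\}_{n\ge 1}$ in $L^2(\mathbb{T},\mu)$, so that $G_{k,j} = \langle z^{k-1}, z^{j-1}\rangle_{L^2(\mu)} = \gamma(k-j)$. Let $H^2(\mu)$ be the closure of the analytic polynomials in $L^2(\mathbb{T},\mu)$; under condition (C) this is a reproducing kernel Hilbert space of analytic functions on $\mathbb{D}$. The standard formula expressing the reproducing kernel of a Hilbert space through the inverse of the Gram matrix of a basis then gives exactly
$$K_{H^2(\mu)}(z,w) = \sum_{n,m=1}^\infty (G^{-1})_{n,m}\, z^{n-1}\bar w^{m-1} = K_f(z,w),$$
so that $K_f$ is the reproducing kernel of the weighted Hardy space $H^2(\mu)$. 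This is the step where the invertibility of $G$ and the bound \eqref{sdse34rfs} enter: \eqref{sdse34rfs} guarantees that $f$ is almost surely analytic on $\mathbb{D}$ (the bounded variances $\mathbb{E}|\xi_n|^2 = (G^{-1})_{n,n}$ force $\limsup|\xi_n|^{1/n}\le 1$, hence radius of convergence at least $1$) and that the double series for $K_f$ converges and represents the genuine inverse Gram matrix rather than a merely formal one.

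Next I would evaluate this reproducing kernel explicitly using the Szeg\H{o} factorisation. Condition (C) together with the bounded invertibility of $G$ forces $\varphi$ to be bounded away from $0$, hence $\log\varphi \in L^1(\mathbb{T})$, so the outer (Szeg\H{o}) function $D$ with $|D|^2 = \varphi$ on $\mathbb{T}$ exists and is analytic and zero-free on $\mathbb{D}$. Multiplication by $1/D$ is then an isometry of the ordinary Hardy space $H^2$ onto $H^2(\mu)$, and transporting the Szeg\H{o} kernel $(1-z\bar w)^{-1}$ of $H^2$ through this isometry yields
$$K_f(z,w) = K_{H^2(\mu)}(z,w) = \frac{1}{D(z)\overline{D(w)}}\cdot\frac{1}{1 - z\bar w}.$$
Taking $h = D$ gives $\mathbb{E}[g(z)\overline{g(w)}] = D(z)\overline{D(w)}K_f(z,w) = (1-z\bar w)^{-1}$, so $g = Df$ is a proper Gaussian analytic function with exactly the covariance of the Peres--Vir\'ag series and therefore has the same law. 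Since $D$ is zero-free on $\mathbb{D}$, $f$ and $g$ share the same zero set, whence the zeros of $f$ form the Bergman determinantal process with joint intensity $\det\left(\pi^{-1}(1-z_k\bar z_j)^{-2}\right)$, as claimed.

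I expect the analytic heart of the argument — and the main obstacle — to be the rigorous identification $K_f = K_{H^2(\mu)}$ and its evaluation as $D(z)^{-1}\,\overline{D(w)}^{-1}(1-z\bar w)^{-1}$. The delicate points are the density of polynomials in $H^2(\mu)$, the exact equality between the \emph{matrix} inverse $G^{-1}$ and the coefficients of the reproducing kernel (this is where bounded invertibility and \eqref{sdse34rfs} are indispensable, to exclude the discrepancies that arise when $G$ is only formally invertible), and the verification that multiplication by $1/D$ maps \emph{onto} $H^2(\mu)$ rather than merely into it. Once these Hardy-space facts are secured, the probabilistic conclusion is immediate from the gauge invariance of the zero set and from Peres and Vir\'ag's theorem.
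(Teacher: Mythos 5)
Your strategy is genuinely different from the paper's, and where it works it is cleaner. The paper proves that $\mathbb{K}_G$ is the reproducing kernel of the weighted Hardy space $H^2_G(\mathbb{D})$ (Theorem \ref{sdsd34edsds}) and then, rather than factorizing this kernel, establishes a M\"obius-invariance lemma (Lemma \ref{lemmaSM}): conditionally on $f(w_1)=\cdots=f(w_n)=0$, the function $f$ has the law of $T_{w_1}\cdots T_{w_n}f$. This is proved by exhibiting the conditional covariance kernel and the covariance kernel of $T_wf$ as reproducing kernels of the same subspace $\{g\in H^2_G(\mathbb{D}):g(w)=0\}$, after which the Peres--Vir\'ag permanent/determinant computation is rerun. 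Your proposal bypasses the conditioning machinery entirely (the paper's Lemma \ref{lemmaone} on Schur complements becomes unnecessary): factor $\varphi=|D|^2$ with $D$ outer, identify $\mathbb{K}_G(z,w)=\bigl(D(z)\overline{D(w)}\bigr)^{-1}(1-z\overline{w})^{-1}$, and gauge $f\mapsto Df$ onto the i.i.d.\ series. This explains structurally what the paper's Section 6 verifies example by example ($\mathbb{K}_G=\psi\,\mathbb{K}\,\overline{\psi}$ with $\psi=1/D$ zero-free), and the surjectivity you worry about is standard: $D$ outer implies $\{pD:p \mbox{ polynomial}\}$ is dense in $H^2(\mathbb{D})$ (Beurling), so multiplication by $1/D$ maps $H^2(\mathbb{D})$ isometrically onto the closure of the polynomials in $L^2(\mu)$ and the kernel transports as you state.

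However, there is a genuine gap exactly where the outer function $D$ is constructed. You claim that condition (C) together with (\ref{sdse34rfs}) forces $\varphi$ to be bounded away from $0$. That is false, and the paper's own Section 6.3 supplies a counterexample: for fractional Gaussian noise with $h<1/2$ all hypotheses of Theorem \ref{mainth} hold, yet $\varphi_h(t)=O\left(t^{1-2h}\right)\to0$ as $t\to0^+$. (The confusion is likely between the entrywise bound (\ref{sdse34rfs}) and invertibility of the Toeplitz operator with symbol $\varphi$ on $\ell^2$; the latter would indeed force $\mathrm{ess\,inf}\,\varphi>0$, but it is a much stronger hypothesis than the paper's.) What your construction actually needs is only the Szeg\"o condition $\log\varphi\in L^1(\mathbb{T})$, which does hold in that example; but condition (C) alone does not give it, since $\varphi>0$ a.e.\ is compatible with $\int\log\varphi=-\infty$, and in that case no outer $D$ exists, the analytic polynomials are dense in all of $L^2(\mu)$, and your weighted Hardy space is not a space of analytic functions at all, so the argument collapses. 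The gap is fillable from (\ref{sdse34rfs}), but by an argument you did not give. For example: by the paper's Theorem \ref{sdsd34edsds} at $z=w=0$ one has $(G^{-1})_{1,1}=\sum_{k\geq1}|P_k(0)|^2$, while $\left(\sum_{k=1}^{n}|P_k(0)|^2\right)^{-1}=\min\{\|p\|^2_{L^2(\mu)}:\deg p\leq n-1,\ p(0)=1\}$, which by the Kolmogorov--Krein--Szeg\"o theorem decreases to $\exp\left(\int_{\mathbb{T}}\log\varphi(\theta)\,d\theta\right)$; so if $\log\varphi\notin L^1(\mathbb{T})$ then $(G^{-1})_{1,1}=\infty$, contradicting (\ref{sdse34rfs}). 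Once this lemma is inserted, your proof goes through and is a valid, genuinely alternative route to Theorem \ref{mainth}.
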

If $(\zeta_n)$ is a centred  Gaussian process with covariance matrix $G$ (that is a Toeplitz matrix), the zero set of $f(z) =  \sum_{n=1}^\infty \zeta_n z^{n-1}$ does not necessarily have the same distribution as $\sum_{n=1}^\infty \chi_n z^{n-1}$ for i.i.d Gaussian variables $(\chi_n)$. (This is discussed in \cite{Mukeru_al}. An example is also given in section 6.1.) However if we take instead of $(\zeta_n)$  a sequence $(\xi_n)$ with covariance matrix $G^{-1}$ then the corresponding zero set has the same distribution as for i.i.d. Gaussian variables.   
 This implies that in terms of the corresponding zero sets sequences of Gaussian variables whose covariance matrix is the inverse of a Toeplitz matrix are more closer to the sequence of i.i.d Gaussian variables than sequences of variables whose covariance matrix is a Toeplitz matrix. This looks awkward but one should remember that the distribution of a Gaussian vector depends more directly on the inverse of its covariance matrix rather than the covariance matrix itself. 
Lemma \ref{lemmaone} in section \ref{sdsefer} gives an interesting property of sequences of Gaussian variables whose covariance is an inverse of a Toeplitz matrix. It looks like such sequences are of some independent interest that requires further investigation. 

The question of dependent random variables can be raised in connection with other determinantal point processes, for instance the point processes obtained by Krishnapur \cite{Krishnapur}. It is also relevant in the context of 
Pfaffian processes studied by  Matsumoto and Shirai \cite{Matsumoto}. 
 The rest of the paper is organised as follows. Section 2 contains some basic well-known facts about the zeros of the $f(z)$ and the Szeg\"o kernel. Section 3 contains a connection between the covariance kernel of $f(z)$ and a Hardy space defined by the spectral measure of $(\xi_n)$. In section 4 we obtain some  properties of the sequence $(\xi_n)$. In section 5 we provide an important connection between the classical Mobius transformation and the covariance kernel of $f(z)$ which is key to the proof of the main result. The last section contains some examples that illustrate the main result.

\section{Szeg\"o kernel and Hardy spaces}   
                  
The starting point in the study of the zeros of any zero-mean Gaussian analytic function $f$ in a planar domain is the following general expression for its joint intensity function (Peres and Vir\'ag \cite{Peres_Virag}): 
    \begin{eqnarray} \label{eqnads23001}
     p(z_1, z_2, \ldots, z_n) = \frac{\mathbb{E}\left(|f'(z_1) f'(z_2) \ldots f'(z_n)|^2\,|f(z_1) = f(z_2) = \ldots=f(z_n) = 0\right)}{\pi^n \det(A)}
     \end{eqnarray}
 or equivalently  
   \begin{eqnarray} \label{eqnads231}
     p(z_1, z_2, \ldots, z_n) = \frac{\mbox{perm}(C - B A^{-1} B^*)}{\pi^n \det(A)}
     \end{eqnarray}
  where $A, B$ and $C$ are the $n \times n$ matrices
   $$A = (\mathbb{E}(f(z_k) \overline{f(z_j)})), B = (\mathbb{E}(f'(z_k) \overline{f(z_j)})) \mbox{ and } C = (\mathbb{E}(f'(z_k) \overline{f'(z_j)}))$$ 
and $\mbox{perm}$ denotes the permanent of a matrix. For the classical Gaussian power series $f(z) = \sum_{n=1}^\infty \xi_n z^{n-1}$ with independent and identically distributed (i.i.d) random variables $(\xi_n)$,
    $$ \mathbb{E}(f(z) \overline{f(w)})) = \sum_{n=0}^\infty (z \overline{w})^{n-1} = \frac{1}{1 - z \overline{w}}, \,\, z, w \in \mathbb{D}.$$
That is, $$ \mathbb{E}(f(z) \overline{f(w)})) = \mathbb{K}(z, w)$$ where $\mathbb{K}$ is the classical Szeg\"o kernel. This means that the covariance kernel of $f(z)$  is the Szeg\"o kernel. 
The classical Hardy space
 $\mathbb{H}^2(\mathbb{D})$  is  the class of holomorphic functions $f$ in the unit disc $\mathbb{D}$ for which 
           $$\sup_{0 \leq r < 1} \int_{\mathbb{T}} \left|f(r e^{2\pi i \theta})\right|^2 d\theta = \lim_{r \to 1} \int_{\mathbb{T}} \left|f(r e^{2\pi i \theta})\right|^2 d\theta  < \infty$$ where 
      $\mathbb{T}$ is the unit circle $\mathbb{R}/\mathbb{Z}$. Equivalently $\mathbb{H}^2(\mathbb{D})$ is the class of holomorphic functions  $f(z) = \sum_{n=0}^\infty a_n z^n$,  $z \in \mathbb{D}$, $a_n \in \mathbb{C}$ such that $\sum_{n=0}^\infty |a_n|^2 < \infty.$ 
It is a Hilbert space with the inner product: 
 \begin{eqnarray} \label{ew23edfr1e1}
\langle f, g\rangle = \sum_{n=0}^\infty a_n \overline{b_n},\,\,\,\, f = \sum_{n=0}^\infty a_n z^n, \,\,g = \sum_{n=0}^\infty b_n z^n.
\end{eqnarray}
Any function $f(z) = \sum_{n=0}^\infty a_n z^n$ in $\mathbb{H}^2(\mathbb{D})$ is such that its radial limit
$$\tilde{f}(\theta) = \lim_{r\to 1} f(r e^{2\pi i \theta}) = f(e^{2\pi i \theta})=  \sum_{n=0}^\infty a_n e^{2\pi i n \theta}$$
exists almost everywhere in $\mathbb{T}$ and $\tilde{f} \in L^2(\mathbb{T})$. Moreover
$$\langle f, g\rangle = \int_{\mathbb{T}} \tilde{f}(\theta) \overline{\tilde{g}(\theta)} d\theta =  \int_{\mathbb{T}} f(e^{2\pi i \theta}) \overline{g(e^{2\pi i \theta})} d\theta$$ and 
 $$\|f\|^2_{H^2(\mathbb{D})} = \|\tilde{f}\|_{L^2(\mathbb{T})} = \sum_{n=0}^\infty |a_n|^2.$$
(See Katznelson \cite[p 98]{Katznelson}.)    

It is well-known that $\mathbb{H}^2(\mathbb{D})$  is a reproducing kernel Hilbert space whose kernel is the Szeg\"o kernel. This means that for each $y \in \mathbb{D}$ and for each $f \in \mathbb{H}^2(\mathbb{D})$, the function
$\mathbb{K}(.,y): \mathbb{D} \to \mathbb{C}$ defined by $\mathbb{K}(.,y)(x) = \mathbb{K}(x, y)$ is such that
     $$f(y) = \langle f, \mathbb{K}(.,y) \rangle.$$
(We refer to the book by Paulsen \cite{Paulsen} for a background on reproducing kernel Hilbert spaces.)
The main argument of Peres and Vir\'ag is to make use of these connections between the Hardy space $\mathbb{H}^2(\mathbb{D})$ and the Szeg\"o kernel. 

\section{Inverse Toeplitz matrices and weighted Hardy spaces}  
In the general case where the covariance matrix of the variables $(\xi_n)$ is the matrix $G^{-1}$ (where $G$ is an infinite hermitian and positive definite Toeplitz matrix), the covariance kernel of the function   $f(z)$ is given by
  \begin{eqnarray} \label{sdds34rewdss}
\mathbb{K}_G(z,w) = \mathbb{E}\left(f(z) \overline{f(w)}\right) =  \sum_{k,j=1}^\infty \left(G^{-1}\right)_{k,j} z^{k-1} \left(\overline{w}\right)^{j-1}, \,\, z, w \in \mathbb{D}.
\end{eqnarray}
For the convergence of the series in   (\ref{sdds34rewdss}) it is enough to assume that 
 $\sup_{k,j} |(G^{-1})_{k,j}| < \infty$
(that is condition (\ref{sdse34rfs}) in Theorem \ref{mainth}.)
One can write        $$ \mathbb{K}_G(z,w) =  Z^{T} G^{-1} \overline{W}$$
where $Z^T = (1, z, z^2, \ldots)$ and $W = (1, w, w^2, \ldots)^T$. 
Clearly in the particular case where $G$ is the identity matrix $\mathbb{K}_G$ is the Szeg\"o kernel . An important tool in the proof of the main result is the fact that there is a reproducing kernel Hilbert space whose kernel is the covariance kernel $\mathbb{K}_G$. Assume that the Toeplitz matrix $G$ is given by $G_{k,j} = \gamma(k-j) = {\overline{\gamma(j-k)}}$ for  a function $\gamma$ defined on the integers. We shall assume without loss of generality that $G_{k,k} = \gamma(0) = 1$ for all $k$.
Since $G$ is a Toeplitz matrix and it is positive definite then by the classical Bochner theorem, there exists a probability measure $\mu$ on the unit circle $\mathbb{T}$ such that 
\begin{eqnarray} \label{sdwdsdserfds}
\gamma(n) = \int_{\mathbb{T}} e^{2\pi i n \theta} d\mu(\theta),\,\, \mbox{ for all } n \in \mathbb{Z}.
\end{eqnarray}
We have assumed throughout that $\mu$ is absolutely continuous. Its density $\varphi$ is called the spectral density function of the matrix $G$ and is such that 
\begin{eqnarray} \label{dsdsedererss}
\gamma(n) = \int_{\mathbb{T}} e^{2\pi i n \theta} \varphi(\theta)  d\theta,\,\, \mbox{ for all } n \in \mathbb{Z}
\end{eqnarray}
which (under some conditions)  implies in return that 
\begin{eqnarray} \label{dsdswedwew334}
\varphi(\theta) = \sum_{n \in \mathbb{Z}} \gamma(n) e^{-2\pi  i n \theta},\,\, \theta \in \mathbb{T}. 
\end{eqnarray}
Consider the sub-space $H^2_G(\mathbb{D})$ of the Hardy space $H^2(\mathbb{D})$ of functions $f(z) = \sum_{n=0}^\infty a_n z^n$, $z \in \mathbb{D}$, $a_n \in \mathbb{C}$ such that 
 $$\|f(e^{2\pi i \theta})\|^2_{L^2(\mu)} = \int_{\mathbb{T}} |f(e^{2\pi i \theta})|^2 d\mu(\theta)  < \infty$$ and set
  $$\|f\|_{H^2_G(\mathbb{D})} = \|f(e^{2\pi i \theta})\|_{L^2(\mu)}.$$
Clearly if $$\|f\|^2_{H^2_G(\mathbb{D})} = \int_{\mathbb{T}} |f(e^{2\pi i \theta})|^2 \varphi(\theta) d\theta = 0,$$ then the fact that $\varphi > 0$ almost everywhere in $\mathbb{T}$ yields
$$\|f\|^2_{H^2(\mathbb{D})} = \int_{\mathbb{T}} |f(e^{2\pi i \theta})|^2 d\theta = 0.$$ 
This implies that $f = 0$ everywhere in $\mathbb{D}$.  Moreover the norm $\|.\|_{H^2_G(\mathbb{D})}$ is complete. 
 Indeed, let $(f_k)$  be a sequence of functions in $H^2_G(\mathbb{D})$ such that 
$$\lim_{k,j\to \infty} \|f_k - f_j\|^2_{H^2_G(\mathbb{D})} = \lim_{k,j\to \infty} \|f_k(e^{2\pi i \theta}) - f_j(e^{2\pi i \theta})|^2 _{L^2(\mu)}=  0.$$ Since $L^2(\mu)$ is complete, then the sequence $(f_k(e^{2\pi i \theta}))$ has a limit $\ell: \mathbb{T} \to \mathbb{C}$ in $L^2(\mu)$. That is,
 \begin{eqnarray*} 
\lim_{k\to \infty} \int_{\mathbb{T}} |f_k(e^{2\pi i \theta}) - \ell(\theta)|^2 \varphi(\theta) d\theta =  0.
\end{eqnarray*}
Again since $\varphi >0$ almost everywhere, then 
\begin{eqnarray} \label{kaziba1}
\lim_{k\to \infty} \int_{\mathbb{T}} |f_k(e^{2\pi i \theta}) - \ell(\theta)|^2 d\theta =  0.
\end{eqnarray}
 This means that $\ell$ is also the limit of the sequence  $(f_k(e^{2\pi i \theta}))$ in 
$L^2(\mathbb{T})$. It follows that $\ell \in L^2(\mathbb{T})$. We can consider the Fourier series of $\ell$ and write
 $\ell(\theta) = \sum_{n = -\infty}^{\infty} \hat \ell(n) e^{2\pi i n \theta} $ (this series converges in $L^2(\mathbb{T})$) and $$\sum_{n = -\infty}^{\infty} |\hat \ell(n)|^2  = \|\ell\|^2_{L^2(\mathbb{T})} < \infty.$$ 
Set
    $$f_{n,k}(z) = \sum_{n=0}^\infty a_{n,k} z^n.$$ 
  Then (\ref{kaziba1}) implies
               $$\lim_{k \to \infty} \sum_{n \geq 0} |a_{n,k} - \hat \ell(n)|^2 + \sum_{n < 0} |\hat \ell(n)|^2 = 0.$$
Hence $ \hat \ell(n) = 0$ for all $n < 0$. Then consider the function 
 $g(z) = \sum_{n = 0}^\infty \hat \ell(n) z^n$. It is now clear that $(f_k)$ converges to $g$ both in $H^2(\mathbb{D})$ and $H^2_G(\mathbb{D})$. 

We define the inner product on $H^2_G(\mathbb{D})$ by:
 \begin{eqnarray} \label{ew23edfr1eswe}
\langle f, g\rangle =  \int_{\mathbb{T}} f(e^{2\pi i \theta}) \overline{g(e^{2\pi i \theta})} d\mu(\theta).
\end{eqnarray}
Clearly $H^2_G(\mathbb{D})$ is a Hilbert space. We want to show that $\mathbb{H}^2_G(\mathbb{D})$ is in fact a reproducing kernel Hilbert space whose kernel is $\mathbb{K}_G$.  
   \begin{theorem} \label{sdsd34edsds}
The space $\mathbb{H}^2_G(\mathbb{D})$ is a reproducing kernel Hilbert space whose kernel is 
  $\mathbb{K}_G$ given by
  $$ \mathbb{K}_G(z,w) =  Z^{T} G^{-1} \overline{W},\,\,z, w \in \mathbb{D}$$
    with $Z = (z^n)_{n \in \mathbb{N}}$ and $W = (w^n)_{n \in \mathbb{N}}$.
\end{theorem}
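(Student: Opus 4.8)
The plan is to establish the two requirements of a reproducing kernel Hilbert space for $\mathbb{H}^2_G(\mathbb{D})$: that the evaluation functionals $f \mapsto f(w)$, $w \in \mathbb{D}$, are bounded, and that their Riesz representatives are exactly the sections $\mathbb{K}_G(\cdot, w)$. The whole argument rests on one reduction: the monomials $e_k(z) = z^{k-1}$ ($k \ge 1$) span a dense subspace of $\mathbb{H}^2_G(\mathbb{D})$ and, by (\ref{sdwdsdserfds}), have Gram matrix precisely $G$, since $\langle e_k, e_j\rangle = \int_{\mathbb{T}} e^{2\pi i(k-j)\theta}\, d\mu(\theta) = \gamma(k-j) = G_{k,j}$. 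Identifying the reproducing kernel thereby becomes the algebraic task of inverting the Gram matrix of this system, which is where $G^{-1}$ enters.

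For boundedness of evaluation, I would write a function $f = \sum_{n\ge 1} a_n e_n$ in terms of its coefficient vector $a = (a_n)$, so that $f(w) = \sum_n a_n w^{n-1} = \langle a, \overline{W}\rangle_{\ell^2}$ with $W = (w^{n-1})_n$, while $\|f\|^2_{\mathbb{H}^2_G(\mathbb{D})} = \sum_{n,m} a_n\overline{a_m}\, G_{n,m}$ is the Hermitian form attached to $G$. The generalized Cauchy--Schwarz inequality for a positive definite form then gives
$$|f(w)|^2 = |\langle a, \overline{W}\rangle|^2 \le \Big(\sum_{n,m} a_n\overline{a_m}\,G_{n,m}\Big)\Big(\sum_{k,j}(G^{-1})_{k,j}\,w^{k-1}\overline{w}^{\,j-1}\Big) = \|f\|^2_{\mathbb{H}^2_G(\mathbb{D})}\,\mathbb{K}_G(w,w),$$
and the factor $\mathbb{K}_G(w,w)$ is finite for every $w \in \mathbb{D}$ by the bound (\ref{sdse34rfs}), which forces the defining series of $\mathbb{K}_G$ to converge. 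Hence evaluation at $w$ is a bounded functional and, by the Riesz representation theorem, possesses a unique representative $k_w \in \mathbb{H}^2_G(\mathbb{D})$; in particular $\mathbb{H}^2_G(\mathbb{D})$ is a reproducing kernel Hilbert space.

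It then remains to identify $k_w$ with $\mathbb{K}_G(\cdot, w)$. Expanding $k_w = \sum_m b_m(w)\, e_m$ and imposing $\langle e_n, k_w\rangle = e_n(w) = w^{n-1}$ for every $n$ turns the defining relations into $\sum_m G_{n,m}\,\overline{b_m(w)} = w^{n-1}$, i.e. $G\,\overline{b(w)} = W$, whence $\overline{b(w)} = G^{-1}W$ and the coefficients of $k_w$ are read off directly from $G^{-1}$. Feeding these back and checking the reproducing identity on each monomial reduces the entire claim to the single matrix identity $G\,G^{-1} = I$; the passage from monomials to arbitrary $f \in \mathbb{H}^2_G(\mathbb{D})$ is then routine, by density of polynomials and continuity of the evaluation functionals just shown to be bounded.

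The step I expect to demand the most care is the bookkeeping of complex conjugates in this last reduction. Because the inner product (\ref{ew23edfr1eswe}) is conjugate-linear in its second argument, conjugating the coefficients of $k_w$ replaces each entry $(G^{-1})_{k,j}$ by $(G^{-1})_{j,k} = \overline{(G^{-1})_{k,j}}$, and one must invoke the Hermitian symmetry of $G$ (and hence of $G^{-1}$) precisely to realign the indices so that the product collapses to the identity; this is automatic when $G$ is real, the setting of the examples in the final section, and needs genuine attention in the general Hermitian case. A secondary obstacle is purely analytic: under Condition (C) alone one controls neither an upper nor a lower bound for the density $\varphi$, so the membership $\mathbb{K}_G(\cdot, w) \in \mathbb{H}^2_G(\mathbb{D})$ and the density of polynomials must be argued through the form attached to $G$ rather than through a naive comparison of the $\mathbb{H}^2_G(\mathbb{D})$ and $\mathbb{H}^2(\mathbb{D})$ norms, with Condition (C) ensuring only that $\|\cdot\|_{\mathbb{H}^2_G(\mathbb{D})}$ is a genuine norm.
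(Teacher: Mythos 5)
Your proposal is correct in outline, but it is organized quite differently from the paper's proof. The paper is constructive: after the same starting observation as yours, namely that the Gram matrix of the monomials is $\langle z^{k-1},z^{j-1}\rangle=\gamma(k-j)=G_{k,j}$, it runs Gram--Schmidt on $(1,z,z^2,\ldots)$ to get orthonormal polynomials $P=A^{-1}Z$ with $A$ lower triangular and $AA^*=G$, and identifies the kernel through the expansion $\sum_k P_k(z)\overline{P_k(w)}=\lim_n Z_n^T(G_n)^{-1}\overline{W_n}=Z^TG^{-1}\overline{W}$; in a separate step it verifies the reproducing identity on monomials using $GG^{-1}=I$. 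You instead get existence abstractly (the generalized Cauchy--Schwarz bound $|f(w)|^2\le\|f\|^2_{\mathbb{H}^2_G(\mathbb{D})}\,\mathbb{K}_G(w,w)$ makes point evaluations bounded, and Riesz representation applies), and you identify the representative by solving $G\,\overline{b(w)}=W$; that linear system is the paper's monomial computation read backwards, so the algebraic core is shared, but the architecture is not. Each route buys something: yours avoids the paper's unjustified limit interchange (that the inverses $(G_n)^{-1}$ of the finite sections converge, in the bilinear pairing against $Z_n,W_n$, to the pairing with the infinite $G^{-1}$ --- not automatic for infinite matrices), while the paper's route produces the polynomials $(P_k)$ explicitly, which are reused in the corollary ($f\stackrel{d}{=}\sum_n\chi_nP_n$) and in the examples of Section 6. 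Note that your Cauchy--Schwarz step itself needs a finite-section justification in infinite dimensions (for instance via the Schur-complement inequality $(G_n)^{-1}\preceq(G^{-1})_n$), so neither route escapes a truncation argument; and both proofs assume without proof that polynomials are dense in $\mathbb{H}^2_G(\mathbb{D})$ --- the paper when it calls the Gram--Schmidt output a basis, you when you pass from monomials to general $f$. Under Condition (C) alone this density is a genuinely nontrivial point.

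One warning about the step you flagged as delicate: your instinct is right, but your proposed remedy is not, since Hermitian symmetry cannot realign the indices. Solving $G\,\overline{b(w)}=W$ gives a representative with coefficient vector $\overline{G^{-1}W}$, i.e. $k_w(z)=Z^T(G^{-1})^T\overline{W}$, and for Hermitian non-real $G^{-1}$ one has $(G^{-1})^T=\overline{G^{-1}}\neq G^{-1}$, so this is a genuinely different function from the stated $Z^TG^{-1}\overline{W}$. In fact, in the complex Kac--Murdock--Szeg\"o example of Section 6.2 a residue computation shows that $Z^T(G^{-1})^T\overline{W}$ does satisfy the reproducing identity in $\mathbb{H}^2_G(\mathbb{D})$ while $Z^TG^{-1}\overline{W}$ fails it (already for $f\equiv 1$ at $w=0$). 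The paper's own proof commits the same silent transposition, both in its monomial computation and in the chain $\sum_kP_k(z)\overline{P_k(w)}=Z_n^T(G_n)^{-1}\overline{W_n}$ (done carefully, this sum equals $Z_n^T\overline{(G_n)^{-1}}\,\overline{W_n}$). So this is a bookkeeping defect of the statement itself in the complex case --- harmless when $G$ is real, and repairable by replacing $G$ with $G^T$, equivalently $\mu$ with its reflection --- rather than a gap in your approach relative to the paper's; but you should not promise that the product ``collapses to the identity'' by Hermitian symmetry alone.
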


\begin{proof}
We shall first prove that the monomials $z^n$ are in the reproducing kernel Hilbert space associate to $\mathbb{K}_G$. That is, 
    $$z^n = \int_{\mathbb{T}} e^{2\pi i n \theta} \,\,\overline{\mathbb{K}_G(e^{2\pi i \theta}, z)} d\mu(\theta).$$
Since for all $w, y \in \mathbb{D}$,
  $$ \mathbb{K}_G(w, y) = \sum_{k, j = 1}^\infty (w)^{k-1} (\overline{y})^{j-1} (G^{-1})_{k,j} = \overline{\mathbb{K}_G(y, w)},$$ then 

\begin{eqnarray*}
\int_{\mathbb{T}} e^{2\pi i n \theta} \,\,\overline{\mathbb{K}_G(e^{2\pi i \theta}, z)} d\mu(\theta) & = & \int_{\mathbb{T}} e^{2\pi i n \theta} \,\,\mathbb{K}_G(z, e^{2\pi i \theta}) d\mu(\theta) \\
 & = &  \sum_{j=1}^{\infty} z^{j-1} \sum_{k=1}^\infty (G^{-1})_{k,j} \int_{\mathbb{T}}  e^{2\pi i (n - k+1) \theta} d\mu(\theta) \\
& = & \sum_{j=1}^{\infty} z^{j-1} \sum_{k=1}^\infty (G^{-1})_{k,j} \,\gamma(n-k+1) \\
& = & \sum_{j=1}^{\infty} z^{j-1} \sum_{k=1}^\infty (G^{-1})_{k,j}\, G_{n+1,k} \\
& = & z^n.\\
\end{eqnarray*}
To complete of the proof it suffices to determine an orthonormal basis $\{P_k(z): k=1,2,\ldots\}$ of $H^2_G(\mathbb{D})$ and prove that it is the case that
          $$\sum_{k=1}^\infty P_k(z) \overline{P_k(w)} = \mathbb{K}_G(z, w),\,\,\mbox{ for all } z, w \in \mathbb{D}.$$
First, it is clear that  in the Hilbert space $H^2_G(\mathbb{D})$, 
for all $k,j \in \mathbb{N}$, 
           \begin{eqnarray*}
         \langle z^k,  z^j \rangle & = & \int_{\mathbb{T}}  e^{2\pi i(k-j) \theta}  d\mu(\theta)  \\
                  & = & \gamma(k-j).
 \end{eqnarray*}
Next we shall take $(P_k)$ to be the orthonormal basis deduced from 
the sequence of polynomials $(1, z, z^2, \ldots, z^k, \ldots)$ by the classical Gram--Schmidt process.  
Consider an infinite lower-triangular matrix $A = (a_{k,j})$ (that is $a_{k,j} = 0$ for $j>k$) such that 
 for each $k$,
\begin{eqnarray} \label{sdewewe3reew}
z^{k-1} = a_{k,1} P_1(z) + a_{k,2} P_2(z) + \ldots+ a_{k,k} P_k(z).
\end{eqnarray}
Then since $\{P_k: k=1,2,\ldots\}$ is orthonormal, then for $j\leq k$, 
\begin{eqnarray*}
\langle z^{k-1},  z^{j-1} \rangle  = a_{k,1} \overline{a_{j,1}} + a_{k,2} \overline{a_{j,2}} + \ldots+ a_{k,j} \overline{a_{j,j}}.
\end{eqnarray*}
Moreover, using 
$ \langle z^{k-1},  z^{j-1} \rangle  = \gamma(k-j)$, it follows that
  $$a_{k,1} \overline{a_{j,1}} + a_{k,2} \overline{a_{j,2}} + \ldots+ a_{k,j} \overline{a_{j,j}} = \gamma(k-j) = G_{k,j}.$$
This yields
       $A A^* = G$ where $A^*$ is the conjugate transpose of $A$.
It follows from (\ref{sdewewe3reew}) that 
   \begin{eqnarray*}
  \begin{pmatrix}
   P_1(z)\\
   P_2(z)\\
   P_3(z)\\
   \vdots\\
   
   P_k(z)\\
   \vdots
  \end{pmatrix}
    = A^{-1} 
   \begin{pmatrix}
   1\\
   z\\
   z^2\\
   \vdots\\
   z^{k-1}  \\
\vdots 
   \end{pmatrix} = A^{-1} Z
  \end{eqnarray*}
 for a lower-triangular matrix $A$ such that $A A^* = G$. It is clear that since $A$ is a lower triangular matrix, then $A^{-1}$ is also a lower triangular matrix and moreover $P_k(z)$ is fully determined by the first $k$ rows of $A$.         
Now clearly,
     \begin{eqnarray*}
      \sum_{k=1}^\infty P_k(z) \overline{P_k(w)} &  = & \lim_{n \to \infty} \sum_{k=1}^n P_k(z) \overline{P_k(w)}  \\
& = &\lim_{n \to \infty} \left(\left(A_n\right)^{-1} Z_n\right)^T \overline{\left(A_n\right)^{-1} W_n} \\
& = & \lim_{n\to \infty} Z_n^T (G_n)^{-1} \overline{W_n} \\
& = & Z^T G^{-1} \overline{W}
     \end{eqnarray*}
where $A_n$ (resp. $G_n$)  is the block of $A$ (resp. $G$) consisting of the first $n$ rows and columns of $A$ (resp. $G$) and
$Z_n = (1, z, z^2, \ldots, z^{n-1})$ and $W_n  = (1, w, w^2,  \ldots, w^{n-1})$.
It follows that 
 $$      \sum_{k=1}^\infty P_k(z) \overline{P_k(w)}  = \mathbb{K}_G(z, w)$$ which concludes the proof. 

\end{proof}

\begin{corollary}
The Gaussian analytic function $f(z) = \sum_{n=1}^\infty \xi_n z^{n-1}$ (where $(\xi_n)$ has covariance matrix $G^{-1}$) has the same distribution with the function
                 $g(z) = \sum_{n=1}^\infty \chi_k P_n(z)$ where $(\chi_n)$ is a sequence of standard i.i.d complex Gaussian random variables and $(P_n(z))$ are the polynomials defined by the matrix $G$ as in the proof of Theorem \ref{sdsd34edsds}.              
\end{corollary}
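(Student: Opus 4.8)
The plan is to use the elementary but decisive fact that the law of a centred complex Gaussian field is completely determined by two kernels: its covariance kernel $(z,w) \mapsto \mathbb{E}(f(z)\overline{f(w)})$ and its pseudo-covariance (or relation) kernel $(z,w)\mapsto \mathbb{E}(f(z) f(w))$. Indeed, for any finite collection $z_1,\ldots,z_n \in \mathbb{D}$ the vector $(f(z_1),\ldots,f(z_n))$ is a centred complex Gaussian vector, whose distribution is fixed by the Hermitian matrix $(\mathbb{E}(f(z_k)\overline{f(z_j)}))$ together with the symmetric matrix $(\mathbb{E}(f(z_k) f(z_j)))$. Thus it suffices to check that $f$ and $g$ share both kernels, and then to upgrade the resulting equality of finite-dimensional distributions to equality of law as random holomorphic functions.

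First I would record the two kernels of $f$. Since $(\xi_n)$ is centred with $\mathbb{E}(\xi_k\overline{\xi_j}) = (G^{-1})_{k,j}$ and $\mathbb{E}(\xi_k\xi_j)=0$, a termwise computation gives $\mathbb{E}(f(z)\overline{f(w)}) = \mathbb{K}_G(z,w)$, exactly the kernel in (\ref{sdds34rewdss}), while $\mathbb{E}(f(z)f(w)) = \sum_{k,j} \mathbb{E}(\xi_k\xi_j) z^{k-1} w^{j-1} = 0$. Next I would compute the same two kernels for $g(z) = \sum_{n} \chi_n P_n(z)$. Because $(\chi_n)$ is i.i.d. standard complex Gaussian, one has $\mathbb{E}(\chi_k\overline{\chi_j}) = \delta_{k,j}$ and $\mathbb{E}(\chi_k\chi_j) = 0$, whence
$$\mathbb{E}(g(z)\overline{g(w)}) = \sum_{k,j} P_k(z)\overline{P_j(w)}\,\mathbb{E}(\chi_k\overline{\chi_j}) = \sum_{k} P_k(z)\overline{P_k(w)},$$
which equals $\mathbb{K}_G(z,w)$ precisely by the reproducing-kernel identity established in the proof of Theorem \ref{sdsd34edsds}; likewise $\mathbb{E}(g(z)g(w)) = \sum_{k,j}P_k(z)P_j(w)\,\mathbb{E}(\chi_k\chi_j) = 0$. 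Hence $f$ and $g$ have identical covariance and pseudo-covariance kernels, so every finite-dimensional marginal of the two fields coincides.

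The remaining, and main, technical point is twofold. I first need $g$ to be a genuinely well-defined random element of the space of holomorphic functions on $\mathbb{D}$: since $\sum_k |P_k(z)|^2 = \mathbb{K}_G(z,z) < \infty$ on $\mathbb{D}$ (finiteness coming from condition (\ref{sdse34rfs})), the series defining $g$ converges in $L^2$ at each point, and by the standard theory of Gaussian analytic functions the convergence is almost surely uniform on compact subsets of $\mathbb{D}$, so $g$ is a.s. holomorphic. Second, I must pass from equality of finite-dimensional distributions to equality of the laws of $f$ and $g$ as random analytic functions, and hence of their zero sets. This holds because the Borel $\sigma$-algebra on the space of holomorphic functions, endowed with the topology of local uniform convergence, is generated by the evaluation functionals $h \mapsto (h(z_1),\ldots,h(z_n))$ with the $z_i$ ranging over a countable dense subset of $\mathbb{D}$; two laws agreeing on all such evaluations therefore agree. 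I expect this last measurability/topology step to be the only part requiring genuine care, the two kernel computations being immediate consequences of Theorem \ref{sdsd34edsds} and the properness of the Gaussian variables involved.
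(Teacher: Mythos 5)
Your proposal is correct and follows essentially the same route as the paper: the paper's (one-line) proof likewise observes that $f$ and $g$ share the covariance kernel $\sum_k P_k(z)\overline{P_k(w)} = Z^T G^{-1}\overline{W} = \mathbb{K}_G(z,w)$, established in Theorem \ref{sdsd34edsds}, and concludes equality in distribution. Your additional checks (vanishing pseudo-covariance, almost sure local uniform convergence of the series for $g$, and the passage from finite-dimensional distributions to equality of laws of random analytic functions) merely make explicit what the paper leaves implicit.
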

It is so because the two random functions have the same covariance kernel:
 $$\mathbb{E}(f(z) \overline{f(w)}) = \mathbb{E}(g(z) \overline{g(w)}) = \sum_{k=1}^\infty P_n(z) \overline{P_n(w)} = Z^T G^{-1} \overline{W}.$$

 \section{Some properties of the sequence $(\xi_n)$ of covariance $G^{-1}$} \label{sdsefer}
Here we obtain important properties of the sequence $(\xi_n)$ of covariance matrix $G^{-1}$ that will be useful for the proof of main result. 

\begin{lemma}\label{lemmaone}
Assume that  $(\xi_n)_{n\in \mathbb{N}}$ is a centred complex Gaussian process with zero \\ pseudo-covariance and  covariance matrix $G^{-1}$ where $G$ is an infinite hermitian positive definite Toeplitz matrix. Then for each $n \geq 2$, the conditional joint distribution of the sequence $(\xi_n, \xi_{n+1}, \xi_{n+2}, \ldots)$ under the condition $\xi_1 = \xi_2 = \ldots=\xi_{n-1} = 0$ is equal to the unconditional joint distribution of $(\xi_{1}, \xi_{2}, \xi_{3}, \ldots)$. 
That is,
 $$((\xi_n, \xi_{n+1}, \xi_{n+3}, \ldots)|\xi_1 = \xi_2 = \ldots=\xi_{n-1} = 0) \stackrel{d}{=} (\xi_{1}, \xi_{2}, \xi_3, \ldots)$$
 \end{lemma}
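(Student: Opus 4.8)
The plan is to reduce everything to the covariance structure, using that a centred complex Gaussian process with identically zero pseudo-covariance is completely determined by its covariance matrix, and that conditioning transforms the \emph{precision} matrix $G$ far more transparently than the covariance $\Sigma := G^{-1}$. I would partition the index set $\mathbb{N}$ into the finite head $I_1 = \{1,\dots,n-1\}$ and the infinite tail $I_2 = \{n,n+1,\dots\}$, and write $\Sigma$ and $G$ in the induced $2\times 2$ block form, with blocks $\Sigma_{ij}$ and $G_{ij}$ for $i,j\in\{1,2\}$. The head block $\Sigma_{11}$ is a principal submatrix of the positive definite matrix $G^{-1}$, hence itself positive definite and invertible, a fact I shall use freely. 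Note that $\Sigma$ itself is typically \emph{not} Toeplitz, so the lemma is not a stationarity statement; the Toeplitz structure will be exploited on the precision matrix $G$, not on the covariance.

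First I would record that, since the $\xi_k$ are jointly complex Gaussian, the conditional law of the tail $(\xi_n,\xi_{n+1},\dots)$ given $\xi_1=\cdots=\xi_{n-1}=0$ is again centred complex Gaussian: the conditional mean is $\Sigma_{21}\Sigma_{11}^{-1}(\xi_1,\dots,\xi_{n-1})^{T}$, which vanishes on the conditioning event, and the conditional covariance is the Schur complement
$$\Sigma_{2\mid 1} = \Sigma_{22} - \Sigma_{21}\Sigma_{11}^{-1}\Sigma_{12}.$$
Each entry of $\Sigma_{2\mid 1}$ is a finite computation, because $\Sigma_{11}^{-1}$ is an honest $(n-1)\times(n-1)$ inverse; no infinite inversion enters at this stage. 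Because the pseudo-covariance $\mathbb{E}(\xi_k\xi_j)$ is identically zero, conditioning on the linear event $\{\xi_1=\cdots=\xi_{n-1}=0\}$ preserves this property, so the conditional pseudo-covariance is again zero and the conditional law is entirely determined by $\Sigma_{2\mid 1}$.

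The heart of the argument is then to identify $\Sigma_{2\mid 1}$ with $G_{22}^{-1}$ directly from the relations $G\Sigma=\Sigma G=I$, rather than quoting a finite-dimensional inversion formula. Reading off the $(2,1)$ and $(2,2)$ blocks of $G\Sigma=I$ gives $G_{21}\Sigma_{11}+G_{22}\Sigma_{21}=0$ and $G_{21}\Sigma_{12}+G_{22}\Sigma_{22}=I$; eliminating $G_{21}$ by means of the invertible finite block $\Sigma_{11}$ yields $G_{22}\,\Sigma_{2\mid 1}=I$, and symmetrically $\Sigma_{2\mid 1}\,G_{22}=I$ from $\Sigma G=I$. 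At this point the Toeplitz structure does the decisive work: since $G_{k,j}=\gamma(k-j)$, the tail block satisfies $(G_{22})_{a,b}=G_{n-1+a,\,n-1+b}=\gamma(a-b)=G_{a,b}$, so $G_{22}$ coincides with $G$ itself under the shift $k\mapsto k-(n-1)$. Hence $\Sigma_{2\mid 1}=G_{22}^{-1}=G^{-1}=\Sigma$ after the shift, and the conditional tail and the unconditional process $(\xi_1,\xi_2,\dots)$ are both centred complex Gaussian with zero pseudo-covariance and identical covariance $G^{-1}$, which forces equality in distribution.

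The one genuinely delicate point, which I expect to be the main obstacle, is the block manipulation of the previous step in infinite dimensions: the precision/Schur-complement identity is routine for finite matrices, but here $G_{22}$ and $\Sigma_{22}$ are honest infinite matrices. I would handle it by never inverting an infinite matrix explicitly, instead deriving $G_{22}\Sigma_{2\mid 1}=I$ purely from the postulated convergent products $G\Sigma=\Sigma G=I$ together with the invertibility of the finite block $\Sigma_{11}$; the hypothesis $\sup_{k,j}|(G^{-1})_{k,j}|<\infty$ ensures that the series occurring are exactly the convergent block entries of those products, so the elimination of $G_{21}$ is legitimate. Everything else — the Gaussian conditioning formula, the preservation of properness, and the Toeplitz identification — is then elementary.
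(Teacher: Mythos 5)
Your proposal is correct and takes essentially the same route as the paper: both identify the conditional law of the tail as centred Gaussian with covariance equal to the Schur complement $\Sigma_{22}-\Sigma_{21}\Sigma_{11}^{-1}\Sigma_{12}$, equate that Schur complement with the inverse of the tail block $G_{22}$ of the precision matrix, and then use the Toeplitz shift-invariance $G_{22}=G$ to conclude the conditional covariance is again $G^{-1}$. The only difference is one of rigor, in your favor: the paper simply quotes the Schur-complement/block-inverse identity as well known, whereas you derive it directly from the block equations of $G\Sigma=\Sigma G=I$ (eliminating $G_{21}$ via the finite invertible block $\Sigma_{11}$), and you explicitly note that zero pseudo-covariance is preserved under conditioning---both points the paper leaves implicit.
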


\begin{proof}.
Set $$S_1 = (\xi_1, \xi_2, \ldots), \,\, S_n = (\xi_{n}, \xi_{n+1}, \ldots),\,\,\mbox{ for all } n\geq 1.$$
Then it is well-known that the covariance matrix of 
$$(S_n|\xi_1 = \xi_2 = \ldots=\xi_{n-1} = 0)$$ is the Schur complement of $\mbox{Cov}(\xi_1, \xi_2, \ldots, \xi_{n-1})$ in the overall covariance matrix $\mbox{Cov}(S_1) = G^{-1}.$ It is obtained by taking the matrix $G^{-1}$, take its inverse, that is, $G$, delete the rows and columns corresponding to the random variables $\xi_1, \xi_2, \ldots, \xi_{n-1}$ and take the inverse of the resulting matrix. Now deleting the first $n-1$ rows and columns of the infinite Toeplitz matrix $G$ yields the very same matrix $G$. It follows that 
the covariance matrix of $(S_n|\xi_1 = \xi_2 = \ldots=\xi_{n-1} = 0)$ is just $G^{-1}$. This implies that 
 $(S_n|\xi_1 = \xi_2 = \ldots=\xi_{n-1} = 0)$ has the same distribution as $(\xi_1, \xi_2, \ldots)$. 
 \hfill \qed
\end{proof}
An immediate consequence of this lemma is: 
\begin{corollary} \label{corol1}
For any sequence $(\alpha_k)$ of complex numbers and for any integer $n \geq 1$,
 $$\left.\left(\sum_{k=1}^\infty \alpha_{k} \xi_{n+k-1}\right)\right|\xi_1 = \xi_2 = \ldots=\xi_{n-1} = 0)$$ has the same distribution with $$\sum_{k=1}^\infty \alpha_k \xi_{k}$$ provided the involved series converge almost surely. 
In particular,
           $$(\xi_n|\xi_1 = \xi_2 = \ldots=\xi_{k} = 0) \stackrel{d}{=} \xi_{n-k} \,\, \mbox{ for all } 1 \leq k < n.$$
           \end{corollary}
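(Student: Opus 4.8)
The plan is to read the corollary off Lemma~\ref{lemmaone} by applying one and the same measurable functional to the two sequences that Lemma~\ref{lemmaone} identifies in law. Concretely, I would regard $S_1 = (\xi_1, \xi_2, \ldots)$ and its shift $S_n = (\xi_n, \xi_{n+1}, \ldots)$ as random elements of $\mathbb{C}^{\mathbb{N}}$ equipped with its product (Borel) $\sigma$-algebra. The equality in distribution asserted in Lemma~\ref{lemmaone} is then precisely the statement that the conditional law of $S_n$ given $\{\xi_1 = \cdots = \xi_{n-1} = 0\}$ equals the (unconditional) law of $S_1$ as laws on this sequence space, equivalently that all their finite-dimensional marginals agree.

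Next I would introduce the linear functional $\Phi(\mathbf{x}) = \sum_{k=1}^\infty \alpha_k x_k$, defined on the set $D \subseteq \mathbb{C}^{\mathbb{N}}$ where the series converges. Each partial sum $\Phi_N(\mathbf{x}) = \sum_{k=1}^N \alpha_k x_k$ is continuous, hence measurable, and $\Phi$ is the pointwise limit of $(\Phi_N)$ on $D$; therefore $D$ is measurable and $\Phi\colon D \to \mathbb{C}$ is measurable. The two quantities appearing in the corollary are exactly the images $\Phi(S_n)$ and $\Phi(S_1)$, since $\sum_{k=1}^\infty \alpha_k \xi_{n+k-1} = \Phi(S_n)$ and $\sum_{k=1}^\infty \alpha_k \xi_k = \Phi(S_1)$.

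The core step is to push the equality of laws forward through $\Phi$. By hypothesis $\Phi(S_1)$ converges almost surely, i.e.\ $\mathbb{P}(S_1 \in D) = 1$; since by Lemma~\ref{lemmaone} the conditional law of $S_n$ given $\{\xi_1 = \cdots = \xi_{n-1} = 0\}$ coincides with the law of $S_1$, the measurable event $D$ has conditional probability one for $S_n$ as well, so $\Phi(S_n)$ converges almost surely under the conditioning. Because equal laws remain equal after composition with a fixed measurable map, the image laws $\Phi_\ast \mathcal{L}(S_n \mid \xi_1 = \cdots = \xi_{n-1} = 0)$ and $\Phi_\ast \mathcal{L}(S_1)$ agree, which is the asserted identity in distribution.

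Finally, the ``in particular'' statement is the special case obtained by conditioning on $\{\xi_1 = \cdots = \xi_k = 0\}$ (that is, replacing $n-1$ by $k$, so the shifted sequence is $(\xi_{k+1}, \xi_{k+2}, \ldots)$) and choosing the trivially convergent coefficients $\alpha_j = \delta_{j,\,n-k}$. This isolates the $(n-k)$-th entry, yielding $\xi_{k+(n-k)} = \xi_n$ on the conditioned side and $\xi_{n-k}$ on the unconditioned side. I do not expect a genuine obstacle: the only point requiring care is the measurability bookkeeping for the merely a.e.-defined functional $\Phi$, together with the transfer of almost-sure convergence from $S_1$ to the conditional law of $S_n$ via the equality of laws supplied by Lemma~\ref{lemmaone}.
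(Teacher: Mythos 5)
Your proposal is correct and follows exactly the route the paper intends: the paper states Corollary~\ref{corol1} as an immediate consequence of Lemma~\ref{lemmaone}, and your argument---pushing the equality of conditional and unconditional laws on sequence space forward through the fixed measurable functional $\Phi(\mathbf{x})=\sum_k \alpha_k x_k$, with the convergence set $D$ handled by measurability of partial-sum limits---is precisely the detailed version of that deduction. The coordinate-projection specialization for the ``in particular'' claim is also handled correctly.
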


\section{Mobius transformation and the kernel $\mathbb{K}_G$}
\subsection{Mobius transformation}
For $w \in \mathbb{D}$, consider as in Peres and Vir\'ag \cite{Peres_Virag}, the Mobius transformation of the unit disc $\mathbb{D}$:
           $$T_w(z) = \frac{z-w}{1-z \overline{w}},\,\, z\in \mathbb{D}.$$
In the case where $G$ is the identity matrix (or equivalently the random variables $(\xi_k)$ are independent and identically distributed), Peres and Vir\'ag \cite{Peres_Virag} proved the following lemma:
\begin{lemma}[Peres and Vir\'ag] \label{lemmaPV}
Assume that $(\xi_k)$ is the sequence of standard i.i.d complex Gaussian random variables. Then for any $w$ fixed in $\mathbb{D}$, under the condition $f(w) = 0$, the random function $f(z)$ has the same distribution with 
          $$T_{w}(z) f(z) = \left(\frac{z-w}{1- z \overline{w}}\right) f(z),\,\,z \in \mathbb{D},$$ that is, 
$$(f(z)| f(w) = 0) \stackrel{d}{=} T_{w}(z) f(z)$$ where
   $\stackrel{d}{=}$ denotes equality in distribution.  
 In general for $w_1, w_2, \ldots, w_n$ fixed in $\mathbb{D}$, 
        \begin{eqnarray} 
(f(z)| f(w_1)=0, f(w_2)=0, \ldots, f(w_n) = 0) \stackrel{d}{=} T_{w_1}(z) T_{w_2}(z) \ldots T_{w_n}(z) f(z).
\end{eqnarray}   
\end{lemma}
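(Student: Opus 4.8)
The plan is to exploit the fact that a centred complex Gaussian process with vanishing pseudo-covariance is determined in law by its covariance kernel alone, so that the entire statement reduces to an identity between two covariance kernels. Both $(f(z)\mid f(w)=0)$ and $T_w(z)f(z)$ are centred and have zero pseudo-covariance (the latter because $\mathbb{E}(T_w(z)f(z)\,T_w(z')f(z')) = T_w(z)T_w(z')\,\mathbb{E}(f(z)f(z')) = 0$), so it suffices to show that their covariance kernels coincide.

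First I would treat the single-point case. Using the standard conditioning formula for proper complex Gaussian vectors, the conditional covariance kernel is
\begin{equation*}
\mathbb{E}\big(f(z)\overline{f(z')}\mid f(w)=0\big) = \mathbb{K}(z,z') - \frac{\mathbb{K}(z,w)\,\mathbb{K}(w,z')}{\mathbb{K}(w,w)},
\end{equation*}
whereas the covariance kernel of $T_w(z)f(z)$ is simply $T_w(z)\overline{T_w(z')}\,\mathbb{K}(z,z')$. Matching the two therefore amounts to the purely algebraic identity
\begin{equation*}
\mathbb{K}(z,z')\big(1 - T_w(z)\overline{T_w(z')}\big) = \frac{\mathbb{K}(z,w)\,\mathbb{K}(w,z')}{\mathbb{K}(w,w)},
\end{equation*}
which for the Szeg\"o kernel $\mathbb{K}(z,z') = (1-z\overline{z'})^{-1}$ follows from the elementary computation
\begin{equation*}
1 - T_w(z)\overline{T_w(z')} = \frac{(1-|w|^2)(1-z\overline{z'})}{(1-z\overline{w})(1-\overline{z'}w)}.
\end{equation*}

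For the general case I would argue by induction on $n$. Conditioning on $f(w_1)=0$ replaces $f$ by a centred Gaussian analytic function with covariance kernel $K_1(z,z') = T_{w_1}(z)\overline{T_{w_1}(z')}\,\mathbb{K}(z,z')$; since $T_{w_1}(w_2)\neq 0$ for $w_2\neq w_1$, conditioning further on $f(w_2)=0$ is the same as conditioning this new process on its own value at $w_2$, and in the conditioning formula the outer factors $T_{w_1}(z)$ and $\overline{T_{w_1}(z')}$ cancel, leaving precisely the Szeg\"o identity again. Iterating produces the product $T_{w_1}(z)\cdots T_{w_n}(z)$. Alternatively, and more conceptually, one can invoke the Hardy-space picture: the conditional covariance kernel is the reproducing kernel of the subspace $\{h\in\mathbb{H}^2(\mathbb{D}): h(w_1)=\dots=h(w_n)=0\}$, which equals $B\,\mathbb{H}^2(\mathbb{D})$ for the finite Blaschke product $B=\prod_j T_{w_j}$; as $|B|=1$ on $\mathbb{T}$, multiplication by $B$ is an isometry of $\mathbb{H}^2(\mathbb{D})$, and the reproducing kernel of $B\,\mathbb{H}^2(\mathbb{D})$ is $B(z)\overline{B(z')}\,\mathbb{K}(z,z')$, yielding the claim at once.

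The main obstacle, and the step deserving the most care, is the passage from equality of covariance kernels to equality in distribution for these infinite-dimensional processes: one must confirm that conditioning a proper complex Gaussian process on the event $f(w)=0$ again produces a proper complex Gaussian process whose law is pinned down by second moments, and that the conditioning formula remains valid in the infinite-dimensional setting (equivalently, that $\mathbb{K}(\cdot,w)\in\mathbb{H}^2(\mathbb{D})$, so the relevant orthogonal projection is well defined). Once this functional-analytic bookkeeping is in place, the remaining content is the elementary M\"obius identity displayed above.
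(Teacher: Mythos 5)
Your proposal is correct, but it takes a partly different route from the paper's. Note first that the paper never proves Lemma \ref{lemmaPV} itself: it cites Peres and Vir\'ag and instead proves the generalization, Lemma \ref{lemmaSM}, whose specialization to $G=I$ is this statement. You and the paper make the same initial reduction --- a centred complex Gaussian process with zero pseudo-covariance is determined in law by its covariance kernel, so it suffices to match the conditional kernel $\mathscr{K}_1(z,y)=\mathbb{K}(z,y)-\mathbb{K}(z,w)\mathbb{K}(w,y)/\mathbb{K}(w,w)$ with $\mathscr{K}_2(z,y)=T_w(z)\mathbb{K}(z,y)\overline{T_w(y)}$. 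The difference lies in how that kernel identity is established. You verify it by direct algebra with the explicit Szeg\"o kernel, via the identity $1-T_w(z)\overline{T_w(y)}=(1-|w|^2)(1-z\overline{y})(1-z\overline{w})^{-1}(1-w\overline{y})^{-1}$ (which checks out), and your induction step --- the factors $T_{w_1}(z)\overline{T_{w_1}(y)}$ cancel inside the Schur-complement formula --- is clean, granted the standard fact that iterated Gaussian conditioning agrees with joint conditioning and that the $w_i$ are distinct. The paper instead shows that $\mathscr{K}_1$ and $\mathscr{K}_2$ are both reproducing kernels of the subspace $\mathscr{H}=\{g\in H^2_G(\mathbb{D}):g(w)=0\}$ and invokes uniqueness of reproducing kernels; this is essentially your ``alternative, more conceptual'' Blaschke-product argument, specialized to $G=I$ (and it is also closest in spirit to Peres and Vir\'ag's original conformal-invariance proof). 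What each buys: your explicit computation is more elementary but is tied to the closed form of the Szeg\"o kernel, hence to the i.i.d.\ case; the paper's reproducing-kernel argument never needs a formula for the kernel, which is exactly why it survives the generalization to $\mathbb{K}_G$, where, as the paper remarks, the invariance property underlying your computation fails. The functional-analytic caveats you flag (interpreting conditioning on the null event $\{f(w)=0\}$ via the usual Gaussian conditioning formula, propriety of the conditional law, $\mathbb{K}(\cdot,w)\in\mathbb{H}^2(\mathbb{D})$) are treated just as implicitly by the paper, so they do not constitute a gap relative to it.
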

A closer look at Peres and Vir\'ag's proofs reveals that their main result (that is Theorem \ref{mainth} in the  case where $G$ is the identity matrix) is a consequence of Lemma \ref{lemmaPV}. This implies that if we prove that Lemma \ref{lemmaPV} holds true in the  general case of a  Toeplitz matrix $G$, then the same argument as in Peres and Vir\'ag \cite{Peres_Virag} will complete the proof of Theorem \ref{mainth}. 
In other words, in order to prove our main result, it is sufficient to prove that the following lemma holds.  

\begin{lemma} \label{lemmaSM}
Let $G$ be an invertible infinite hermitian Toeplitz matrix such that its associated measure $\mu$ is absolutely continuous with density $\varphi >0$ almost everywhere on $\mathbb{T}$. Assume that $(\xi_k)_{k\in \mathbb{N}}$ is a centred Gaussian process  with covariance matrix $G^{-1}$ and zero pseudo-covariance. Let
      $$f(z) = \sum_{k=1}^\infty \xi_k z^{k-1},\,\,\, z\in \mathbb{D}.$$
Then for any $w$ fixed in $\mathbb{D}$, 
$$(f(z)| f(w) = 0) \stackrel{d}{=} T_{w}(z) f(z).$$ Moreover, 
 for $w_1, w_2, \ldots, w_n$ fixed in $\mathbb{D}$, 
        \begin{eqnarray} 
(f(z)| f(w_1)=0, f(w_2)=0, \ldots, f(w_n) = 0) \stackrel{d}{=} T_{w_1}(z) T_{w_2}(z) \ldots T_{w_n}(z) f(z). 
\end{eqnarray}   
\end{lemma}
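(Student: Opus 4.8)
The plan is to reduce the distributional identity to an identity between covariance kernels, and then to read that identity off from the weighted Hardy space $H^2_G(\mathbb{D})$ by exploiting the fact that the M\"obius factor $T_w$ is an inner function. First I would note that all the fields in play are centred proper complex Gaussian (zero pseudo-covariance): the field $T_w(z)f(z)$ inherits zero pseudo-covariance from $f$ because $T_w$ is deterministic, and the conditional field $(f(z)\mid f(w)=0)$ is again centred complex Gaussian with zero pseudo-covariance. Hence equality in distribution is equivalent to equality of the Hermitian covariance kernels. For $T_wf$ the kernel is $T_w(z)\overline{T_w(z')}\,\mathbb{K}_G(z,z')$, while the Gaussian conditioning (Schur complement) formula gives, for the conditional field, the kernel
$$\mathbb{K}_G(z,z') - \frac{\mathbb{K}_G(z,w)\,\overline{\mathbb{K}_G(z',w)}}{\mathbb{K}_G(w,w)}.$$
Thus the single-point claim is equivalent to the kernel identity
$$\mathbb{K}_G(z,z') - \frac{\mathbb{K}_G(z,w)\,\overline{\mathbb{K}_G(z',w)}}{\mathbb{K}_G(w,w)} = T_w(z)\,\overline{T_w(z')}\,\mathbb{K}_G(z,z').$$

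The heart of the matter is proving this identity by interpreting both sides as reproducing kernels of subspaces of $H^2_G(\mathbb{D})$, using Theorem \ref{sdsd34edsds}. Since $h(w)=\langle h,\mathbb{K}_G(\cdot,w)\rangle$, the functions vanishing at $w$ form the orthogonal complement $\mathcal{H}_w=\{h\in H^2_G(\mathbb{D}):h(w)=0\}=\{\mathbb{K}_G(\cdot,w)\}^{\perp}$, whose reproducing kernel is precisely the left-hand side. On the other hand, $T_w(z)=\frac{z-w}{1-z\overline w}$ is a single Blaschke factor, so $|T_w(e^{2\pi i\theta})|=1$ on $\mathbb{T}$; therefore multiplication $M_{T_w}\colon h\mapsto T_w h$ is an isometry of $L^2(\mu)$, and since it preserves holomorphy it is an isometry of $H^2_G(\mathbb{D})$ into itself. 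A short computation shows that an isometry given by multiplication by $m$ carries a kernel $K$ to $m(z)\overline{m(z')}K(z,z')$, so the image $T_wH^2_G(\mathbb{D})$ carries exactly the right-hand side. It then remains to identify $T_wH^2_G(\mathbb{D})=\mathcal{H}_w$: the inclusion $\subseteq$ is immediate from $T_w(w)=0$, and for $\supseteq$ I would take $h\in H^2_G(\mathbb{D})$ with $h(w)=0$, set $g=h/T_w$, observe that the zero of $h$ at $w$ cancels the pole so $g$ is holomorphic on $\mathbb{D}$, and use $|T_w|=1$ on $\mathbb{T}$ together with condition $(C)$ to conclude $g\in H^2_G(\mathbb{D})$ with $h=T_wg$.

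For the multipoint statement I would either iterate the single-point case or, more cleanly, rerun the same argument with the finite Blaschke product $B=T_{w_1}T_{w_2}\cdots T_{w_n}$ in place of $T_w$: $B$ is again inner, $M_B$ is a surjective isometry of $H^2_G(\mathbb{D})$ onto $\{h:h(w_1)=\cdots=h(w_n)=0\}$ for distinct $w_i$, and the resulting kernel $B(z)\overline{B(z')}\mathbb{K}_G(z,z')$ matches the rank-$n$ Schur complement that is the conditional covariance of $(f\mid f(w_1)=\cdots=f(w_n)=0)$.

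I expect the main obstacle to be the surjectivity step $\mathcal{H}_w\subseteq T_wH^2_G(\mathbb{D})$: one must verify that $g=h/T_w$ genuinely belongs to $H^2_G(\mathbb{D})$ \emph{as a subspace of the ordinary Hardy space} $H^2(\mathbb{D})$, and not merely that $\int_{\mathbb{T}}|g|^2\,d\mu<\infty$. This is exactly where the hypotheses that $T_w$ is inner and that $\varphi>0$ almost everywhere (condition $(C)$) enter. As a consistency check, for $w=0$ one has $T_0(z)=z$, and the conditioned series is $(f\mid f(0)=0)=(f\mid\xi_1=0)=z\sum_{k\ge1}\xi_{k+1}z^{k-1}$, which by the shift-invariance of Lemma \ref{lemmaone} and Corollary \ref{corol1} satisfies $z\sum_{k\ge1}\xi_{k+1}z^{k-1}\stackrel{d}{=}z f(z)=T_0(z)f(z)$, in agreement with the kernel identity.
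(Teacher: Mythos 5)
Your proposal is correct, and at bottom it is the same proof as the paper's: both reduce the distributional statement to the covariance-kernel identity
\begin{equation*}
\mathbb{K}_G(z,y)-\frac{\mathbb{K}_G(z,w)\,\mathbb{K}_G(w,y)}{\mathbb{K}_G(w,w)}=T_w(z)\,\mathbb{K}_G(z,y)\,\overline{T_w(y)},
\end{equation*}
and both establish it inside the weighted Hardy space $H^2_G(\mathbb{D})$ of Theorem \ref{sdsd34edsds}, using exactly the two facts you isolate: $|T_w|=1$ a.e.\ on $\mathbb{T}$, and division by the Blaschke factor $T_w$ preserves the space. The difference is one of packaging. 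The paper fixes $g$ with $g(w)=0$ and checks by a direct boundary-integral computation that both kernels reproduce $g$; for $\mathscr{K}_2$ it writes $\langle g,\mathscr{K}_2(\cdot,y)\rangle=T_w(y)\langle g\,p,\mathbb{K}_G(\cdot,y)\rangle$ with $p=1/T_w$ and applies the reproducing property to $g\,p$. You instead identify the left side as the kernel of the closed subspace $\mathscr{H}=\{h:h(w)=0\}$ (orthocomplement of $\mathbb{K}_G(\cdot,w)$), the right side as the kernel of the isometric image $T_wH^2_G(\mathbb{D})$, show the two subspaces coincide, and invoke uniqueness of reproducing kernels; your surjectivity step $h/T_w\in H^2_G(\mathbb{D})$ is literally the paper's step $g\,p\in H^2_G(\mathbb{D})$. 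What your version buys is precision at exactly the point the paper glosses over: the paper justifies $g\,p\in H^2_G(\mathbb{D})$ only by bounding $|p|$ on the circle, without noting that holomorphy of $g\,p$ in $\mathbb{D}$ requires the zero of $g$ at $w$ to cancel the pole of $p$, nor that membership in $H^2(\mathbb{D})$ itself rests on the classical F.~Riesz fact about dividing out a Blaschke factor---both of which you make explicit. Two further small contrasts: you obtain the multipoint case in one stroke from the finite Blaschke product $B=T_{w_1}\cdots T_{w_n}$, where the paper appeals to induction, and your restriction to distinct $w_i$ is a caveat the paper omits but genuinely needs (for a repeated point the conditional law is unchanged while the product acquires an extra factor, so the identity as stated fails). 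Finally, the case $w=0$, which you use only as a consistency check, is given a separate elementary proof in the paper via the shift property of Corollary \ref{corol1}.
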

It is now an easy matter to prove that Lemma \ref{lemmaSM} yields Theorem \ref{mainth} based on Peres and Vir\'ag arguments. 
\subsection{Proof of Theorem \ref{mainth}.}
For all fixed $z_1, z_2, \ldots, z_n$, $w_1, w_2, \ldots, w_n$ in $\mathbb{D}$,
   the conditional joint distribution of $$(f(z_1), f(z_2), \ldots, f(z_n)|f(w_1) = f(w_2) = \ldots = f(w_n)=0)$$ is equal to the non-conditional joint distribution of $$\left(T_{w_1}(z_1) f(z_1), T_{w_2}(z_2) f(z_2), \ldots, T_{w_n}(z_n) f(z_n)\right).$$ Taking the derivatives, it follows as in \cite[corollary 13]{Peres_Virag} 
that the conditional joint distribution of 
   $$(f{'}(z_1), f{'}(z_2), \ldots, f'(z_n)|f(z_1) = f(z_2) = \ldots = f(z_n) = 0)$$is the same as the unconditional joint distribution of 
   $$(\Upsilon'(z_1) f(z_1), \Upsilon'(z_2) f(z_2), \ldots, \Upsilon'(z_n) f(z_n))$$
where 
           $$\Upsilon(z) = T_{z_1}(z) T_{z_2}(z) \ldots T_{z_n}(z).$$    
This follows from the fact that
         $$T_{z}'(z) = \frac{1}{1- |z|^2}\,\,\mbox{ and } \,\,T_z(z) = 0,\,\,\,\,z\in \mathbb{D}. $$
At this stage, we make use of relation (\ref{eqnads23001}) to obtain
 \begin{eqnarray*} 
     p_0(z_1, z_2, \ldots, z_n) & = & \frac{\mathbb{E}\left(|f'(z_1) f'(z_2) \ldots f'(z_n)|^2\,|f(z_1) = f(z_2) = \ldots=f(z_n) = 0\right)}{\pi^n \det(A)} \\
     & = & \frac{\mathbb{E}\left(|\Upsilon'(z_1) f(z_1) \Upsilon'(z_2) f(z_2) \ldots \Upsilon'(z_n) f(z_n)|^2\right)}{\pi^n \det(A)} \\
     & = & \frac{\mathbb{E}\left(|f(z_1)  f(z_2) \ldots  f(z_n)|^2\right) \prod_{k=1}^n |\Upsilon'(z_k)|^2 }{\pi^n \det(A)} \\
         \end{eqnarray*}
Using the classical Cauchy determinant formula, Peres and Vir\'ag \cite{Peres_Virag}  showed that 
     $$ \prod_{k=1}^n |\Upsilon'(z_k)|  = \det(A_0)$$     
where  
                $$A_0 = \left(\frac{1}{1- z_k \overline{z_j}}\right)_{k,j=1}^n.$$
Moreover since it is well-known that if $X_1, X_2, \ldots, X_n$ are random variables with joint Gaussian distribution with mean 0 and covariance matrix $\Sigma$, 
    $$\mathbb{E}\left(|X_1 X_2 \ldots X_n|^2\right)  = \mbox{perm}(\Sigma),$$ it follows that, 
 \begin{eqnarray} \label{dsdefesdws}
    p_0(z_1, z_2, \ldots, z_n)  = \frac{\mbox{perm}(A)(\det(A_0))^2 }{\pi^n \det(A)}.
     \end{eqnarray}
Now elementary operations on the matrix $A$ yields
      \begin{eqnarray*}
\mbox{perm}(A) & = &  \mbox{perm}(A_0) \prod_{k=1}^n \left(\frac{1}{|1-z_k|^2}\right)\\
\det(A) & = &  \det(A_0) \prod_{k=1}^n \left(\frac{1}{|1-z_k|^2}\right).
\end{eqnarray*}
Hence (\ref{dsdefesdws}) yields
            \begin{eqnarray*} 
    p_0(z_1, z_2, \ldots, z_n)  = \frac{\mbox{perm}(A_0)\det(A_0) }{\pi^n}
     \end{eqnarray*}
and it is proven in Peres and Vir\'ag \cite[rel. (27)]{Peres_Virag} that 
  $$\mbox{perm}(A_0)\det(A_0)  = \det\left(\frac{1}{(1 - z_k \overline{z_j})^2}\right)_{k,j=1}^n.$$
This concludes the proof. \hfill \qed

\subsection{Proof of Lemma \ref{lemmaSM}.} Peres and Virag's proof is based on the invariance property of the Szeg\"o kernel with respect to Mobius transformations that are conformal mappings. This property does not hold for the general kernel $\mathbb{K}_G$. Our proof is more general. 
 (a) In the case where $w = 0$, it is an immediate consequence of Corollary \ref{corol1}. Indeed, 
\begin{eqnarray*}
(f(z)| f(w) = 0)  & = & (f(z)|f(0) = 0) = (f(z)|\xi_1=0) = \left(\sum_{k=2}^\infty \xi_k z^{k-1}|\xi_1 = 0\right)\\
& = & z \left(\sum_{k=2}^\infty \xi_k z^{k-2}|\xi_1 = 0\right)  =  z \left(\sum_{k=1}^\infty \xi_{k+1} z^{k-1}|\xi_1 = 0\right)\\
&\stackrel{d}{=} & z \sum_{k=1}^\infty \xi_{k} z^{k-1} 
\end{eqnarray*}
where the equality in distribution follows from Corollary \ref{corol1}.\\
(b)  For a general $w \in \mathbb{D}$, set 
            $$F(z) = (f(z)|f(w)  = 0), \,z \in \mathbb{D}.$$ 
Clearly, the covariance kernel of the random function $F(z)$ is given by
   \begin{eqnarray*}
\mathbb{E}(F(z) \overline{F(y)}) & = & \mathbb{E}\left(f(z) \overline{f(y)}\right) - \frac{\mathbb{E}\left(f(z) \overline{f(w)}\right) \mathbb{E}\left(f(w) \overline{f(y)}\right)}{\mathbb{E}\left(f(w) \overline{f(w)}\right)}\\
&  = & \mathbb{K}_G(z, y)-\frac{\mathbb{K}_G(z, w)\mathbb{K}_G(w, y)}{\mathbb{K}_G(w, w)}
\end{eqnarray*}
where $z, y \in \mathbb{D}$. This is clearly a kernel function and we shall denote it by $\mathscr{K}_1$, that is:
  $$\mathscr{K}_1(z, y) = \mathbb{K}_G(z, y)-\frac{\mathbb{K}_G(z, w)\mathbb{K}_G(w, y)}{\mathbb{K}_G(w, w)},\,\,z, y\in \mathbb{D}.$$
We  also denote by $\mathscr{K}_2$ the covariance kernel of the random function $T_w(z) f(z)$, that is,
\begin{eqnarray*}
\mathscr{K}_2(z, y) &  = & \mathbb{E}\left(T_w(z) f(z) \overline{T_w(y) f(y)}\right)\\
                    & =  & T_w(z) \mathbb{K}_G(z, y)  \overline{T_w(y)}.
\end{eqnarray*}
To show that the (Gaussian) random functions $F(z)$ and  $T_w(z) f(z)$ have the same distribution, it is sufficient to show that their  covariance kernels $\mathscr{K}_1$ and $\mathscr{K}_2$ are identical. So we shall prove the following important identity: For all $w, z, y \in \mathbb{D}$,
     $$\mathbb{K}_G(z, y)-\frac{\mathbb{K}_G(z, w)\mathbb{K}_G(w, y)}{\mathbb{K}_G(w, w)}  = T_w(z) \mathbb{K}_G(z, y)  \overline{T_w(y)}.$$      
We shall prove that  $\mathscr{K}_1$ and $\mathscr{K}_2$ are both reproducing kernels of a certain Hilbert space, namely the subspace $\mathscr{H}$ of $H^2_G(\mathbb{D})$ of the functions that vanish at the point $w$. That is, 
          $$\mathscr{H} = \{g \in H^2_G(\mathbb{D}): g(w) = 0\}.$$  
(To see that  $\mathscr{H}$ is closed in  $H^2_G(\mathbb{D})$, assume that $g_k = \sum_{n=0}^\infty a_{n,k} z^n$ converges to $g = \sum_{n=0}^\infty b_n z^n$. Then as discussed earlier, this convergence also holds with respect to the $H^2(\mathbb{D})$-norm and hence $\lim_{k \to \infty} \sum_{n=0}^\infty |a_{n,k} - b_n|^2 = 0$. Then 
for each $z \in \mathbb{D}$ (by the Cauchy-Schwarz inequality)
 $$\lim_{k \to \infty} |g_{k}(z) - g(z)|^2 = \lim_{k \to \infty} |\sum_{n=0}^\infty (a_{n,k} - b_n) z^k|^2 \leq \lim_{k \to \infty} \sum_{n=0}^\infty |a_{n,k} - b_n|^2 \sum_{n=0}^\infty |z|^{2n} = 0.$$ In particular since $g_{k}(w) = 0$ for all $n$, then $g(w) = 0$ and hence $g \in \mathscr{H}$.)
 
To prove the claim that $\mathscr{K}_1$ and $\mathscr{K}_2$ are reproducing kernels $\mathscr{H}$, we shall prove that for any function $g \in \mathscr{H}$ and any $y \in \mathbb{D}$,
\begin{eqnarray} \label{sdsde34rfds}
g(y) = \langle g,  \mathscr{K}_1(.,y)\rangle  = \langle g,  \mathscr{K}_2(.,y)\rangle
\end{eqnarray}
 where
$\mathscr{K}_1(.,y)$ is  the function defined by
   $$\mathscr{K}_1(.,y): \mathbb{D} \to \mathbb{C},\, z \mapsto \mathscr{K}_1(z,y)$$ and similarly for $\mathscr{K}_2.$
Clearly,
         \begin{eqnarray*}
\langle g,  \mathscr{K}_1(.,y)\rangle  & = & \left\langle g,  \mathbb{K}_G(., y)-\frac{\mathbb{K}_G(., w)\mathbb{K}_G(w, y)}{\mathbb{K}_G(w, w)}\right \rangle  \\
& = &  \left\langle g,  \mathbb{K}_G(., y)\right\rangle -  \left(\frac{\overline{\mathbb{K}_G(w, y)}}{\overline{\mathbb{K}_G(w, w)}}\right)  \left\langle g, \mathbb{K}_G(., w)\right \rangle\\
& = & g(y) - \left(\frac{\overline{\mathbb{K}_G(w, y)}}{\mathbb{K}_G(w, w)}\right)\, g(w)\\
& = & g(y)  
  \end{eqnarray*}       
since $g(w) = 0$.\\
For the kernel $\mathscr{K}_2$, we shall make use of the explicit  inner product in $H^2_G(\mathbb{D})$ in relation (\ref{ew23edfr1eswe}), and show that
\begin{eqnarray}\label{sdsdesafa1}
g(y) = \int_{\mathbb{T}} g(e^{2 \pi i\theta}) \overline{\mathscr{K}_2(e^{2 \pi i \theta},y)} d\mu(\theta).
\end{eqnarray}
Clearly, by definition of the kernel $\mathscr{K}_2$, 
\begin{eqnarray} \label{ewdsdaswe45}
\int_{\mathbb{T}} g(e^{2 \pi i\theta}) \overline{\mathscr{K}_2(e^{2 \pi i \theta},y)} d\mu(\theta)
= T_w(y) \int_{\mathbb{T}} g(e^{2 \pi i\theta}) \overline{T_w(e^{2 \pi i \theta})} \, \overline{\mathbb{K}_G(e^{2 \pi i \theta},y)} d\mu(\theta).
\end{eqnarray}
Now note that 
  $$\overline{T_w(e^{2 \pi i \theta})} =  \frac{e^{-2 \pi i \theta} -\overline{w}}{1 -  w  e^{-2 \pi i \theta}} = \frac{1 -\overline{w} e^{2 \pi i \theta} }{e^{2 \pi i \theta} -  w}.$$
 Therefore
     \begin{eqnarray*}
  \langle g,  \mathscr{K}_2(.,y)\rangle & = &  T_w(y)  \int_{\mathbb{T}} g(e^{2 \pi i\theta}) \left(\frac{1 -\overline{w}\  e^{2 \pi i \theta} }{e^{2 \pi i \theta} -  w}\right) \, \overline{\mathbb{K}_G(e^{2 \pi i \theta},y)} d\mu(\theta)\\
      & = &  T_w(y) \left \langle g\ p, \mathbb{K}_G(., y)\right\rangle
     \end{eqnarray*} 
where $p$ is the function defined by 
 $$p(z) = \frac{1 -\overline{w}  z }{z -  w} = \frac{1}{T_w(z)},\,\,\,z \in \mathbb{D}.$$ 
 
Since $g \in H^2_G(\mathbb{D})$, then the product $g p$ is also in $H^2_G(\mathbb{D})$. Indeed,
  $$\int_{\mathbb{T}} |g(e^{2\pi i \theta})|^2 |p(e^{2\pi i \theta})|^2 d\mu(\theta) \leq C \int_{\mathbb{T}} |g(e^{2\pi i \theta})|^2  d\mu(\theta) < \infty$$ where
  $$C = \sup_{\theta \in \mathbb{T}} \left|p(e^{2 \pi i \theta})\right|^2 =  \sup_{\theta \in \mathbb{T}} \left|\frac{1 -\overline{w}  e^{2\pi i \theta }}{e^{2\pi i \theta} -  w}\right|^2 < \infty$$
(because $w \in \mathbb{D}$ and hence $e^{2\pi i \theta} -  w \ne 0$). 
It follows that  
 $$\left \langle g p, \mathbb{K}_G(., y)\right\rangle = g(y) p(y) = g(y) \left(T_w(y)\right)^{-1}.$$
 Hence
       $$ \langle g,  \mathscr{K}_2(.,y)\rangle =  T_w(y) g(y) \left(T_w(y)\right)^{-1} = g(y).$$ 
This yields (\ref{sdsde34rfds}) and concludes the proof of 
           $$F(z) = (f(z)| f(w) = 0) \stackrel{d}{=} T_w(z) f(z).$$ 
Now the general case that
$$(f(z)| f(w_1)=0, f(w_2)=0, \ldots, f(w_n) = 0) \stackrel{d}{=} T_{w_1}(z) T_{w_2}(z) \ldots T_{w_n}(z) f(z)$$
             follows immediately by an induction argument.   This concludes the proof of  Lemma \ref{lemmaSM}. \hfill \qed

\section{Illustrating examples}

\subsection{Explicit inverse of tridiagonal Toeplitz matrices}\label{exampl1}
Given a real number $q$ such that $|q|< 1/2$, consider the Toeplitz matrix
  $$G = \left(\gamma(k-j)\right)_{k,j=1}^\infty$$ where 
  \begin{eqnarray*}
         \gamma(k) = \left \{ \begin{array}{cc}
                                               1 &  \mbox{ if } k=0 \\
                                                q &  \mbox{ if } |k| = 1\\
                                                0 & \mbox{ otherwise. }
                                               \end{array}
                     \right.     
 \end{eqnarray*}
The spectral density function $\varphi$ of $G$ (i.e. the density of the corresponding measure $\mu$) is given by
$$\varphi(\theta) = 1 + q e^{2 \pi i \theta} + q e^{-2 \pi i \theta} = 1 + 2 q \cos(2\pi \theta),\,\, \theta \in \mathbb{T}.$$
Let $G_n$ be the submatrix of $G$ formed by its first $n$ rows and first $n$ columns. Then the inverse $G_n^{-1}$ of $G_n$ is the symmetric matrix given by (see  da Fonseca and Petronilho \cite{da Fonseca}): 
  $$\left(G_n^{-1}\right)_{k, j} = (-1)^{k+j} \frac{q^{j-k}}{|q|^{j-k+1}} \frac{U_{k-1}(\alpha) U_{n-j}(\alpha)}{U_n(\alpha)},\,\,\,1\leq k \leq j\leq n$$
where $$\alpha = \frac{1}{2 |q|}$$ and $(U_k)$ is the sequence of Chebyshev polynomials of second kind given by:
     \begin{eqnarray*}
      U_0 & = &  1\\
      U_1(x) & = & 2 x\\
      U_{k+1} (x) & = & 2 x U_k(x) - U_{k-1}(x),\,\,k = 1,2,\ldots
          \end{eqnarray*}
Explicitly, for $|x| > 1$, 
       $$U_k(x) = \frac{\left(x + \sqrt{x^2-1}\right)^{k+1} - \left(x - \sqrt{x^2-1}\right)^{k+1}}{2 \sqrt{x^2-1}}.$$
Taking the limit of $G_n^{-1}$ as $n \to \infty$, it can be easily checked that the infinite matrix $G$ is indeed invertible and its inverse is the infinite symmetric matrix given for $k \leq j$ by
  \begin{eqnarray*}
\left(G^{-1}\right)_{k,j} & = &  \lim_{n\to \infty} \left(G_n^{-1}\right)_{k,j}\\
&  = & \frac{(-2 q)^{j - k} \left(1 + \sqrt{1 - 4 |q|^2}\right)^{-j} \left(\left(1 + \sqrt{1 - 4 |q|^2}\right)^k - \left(1 - \sqrt{1 - 4 |q|^2}\right)^k\right)}{\sqrt{1 - 4|q|^2}}. 
   \end{eqnarray*}
For example, if $q = -\frac{1}{3}$, the inverse of the infinite matrix $G$  is given by 
 $$\left(G^{-1}\right)_{k, j} = \left(\frac{3}{\sqrt{5}}\right) \left(\frac{3 - \sqrt{5}}{2}\right)^j \left(\left(\frac{3 + \sqrt{5}}{2}\right)^
   k-\left(\frac{3 - \sqrt{5}}{2}\right)^{k}\right),\,\,\mbox{ for } j \geq k.$$
Now some involved (but elementary) calculations yield that the kernel function $\mathbb{K}_G$ defined by the matrix $G$ is given by
       \begin{eqnarray}\label{sdwwe34r}
\mathbb{K}_G(z, w) = Z^T G^{-1} \overline{W} = \frac{\psi(z) \overline{\psi(w)}}{1 - z \overline{w}}
\end{eqnarray}
 where  $\psi$ is the function defined in the unit disc $\mathbb{D}$ 
by
  $$\psi(z) =\left(\frac{2}{|q|}\right)^{1/2} \left(\frac{1}{a + b z}\right)$$ with
 \begin{eqnarray*}
 a & = & \sqrt{|q|^{-1} + \sqrt{q^{-2} - 4}} \\
 b & = & (2/a)\sign(q).
 \end{eqnarray*}
For example for $q = -1/3$,       
$$\psi(z) = \frac{5^{1/4}(3 + \sqrt{5}) \left(\frac{3}{2(5 + 3\sqrt{5})}\right)^{1/2}}{z - \frac{3+ \sqrt{5}}{2}}.$$
Note that relation (\ref{sdwwe34r}) means that
        $$\mathbb{K}_G(z, w) = \psi(z) \mathbb{K}(z, w) \overline{\psi(w)}$$ where $\mathbb{K}$ is the classical Szeg\"o kernel (which is the covariance kernel associated to $\sum_{n=1}^\infty \zeta_n z^{n-1}$ for i.i.d $(\zeta_n)$).      
Then for a sequence $(\xi_n)$ of Gaussian random variables with covariance matrix $G^{-1}$, this implies that  the random functions $\sum_{n=1}^\infty \xi_n z^{n-1}$  and  $\psi(z) \sum_{n=1}^\infty \zeta_n z^{n-1}$ have the same distribution, that is,
 $$\sum_{n=1}^\infty \xi_n z^{n-1}  \stackrel{d}{=}  \psi(z) \sum_{n=1}^\infty \zeta_k z^{k-1}$$
 (for i.i.d $(\zeta_n)$). Since clearly $\psi(z) \ne 0$ everywhere in $\mathbb{D}$, it follows that the zeros of $\sum_{n=1}^\infty \xi_n z^{n-1}$ have the same distribution as the zeros of $\sum_{n=1}^\infty \zeta_n z^{n-1}$ and therefore they constitute a determinantal point process as predicted by Theorem 1. 

In particular the intensity of the zeros of $f(z)$ is also given 
$$p(z) = \frac{1}{\pi(1 - |z|^2)^2},\,\,z \in \mathbb{D}$$ as it is the case for i.i.d variables.
To emphasize that we need to consider the $G^{-1}$ as the covariance matrix instead of $G$, consider a sequence of Gaussian variables $(\tau_n)$ with covariance matrix $G$ and the function 
$g(z) = \sum_{n=1}^\infty \tau_n z^n$. Using relation (\ref{eqnads231}), it is easy to derive that the intensity of the corresponding zero set is
  $$p(z) = \frac{1}{\pi(1 - |z|^2)^2} \left(1 - \frac{q^2(1 - |z|^2)^2}{(1 + q z + q \overline{z})^2}\right),\,\,\,\, z \in \mathbb{D}.$$
Hence clearly the zeros of  $g(z)$ do not have the same distribution as the zeros of  $f(z) = \sum_{n=1}^\infty \zeta_n z^{n-1}$ for i.i.d $(\zeta_n)$ since the corresponding intensity is $p(z) = \pi^{-1}(1 - |z|^2)^{-2}$.  

Finally since the function $\varphi(\theta) = 1 + 2 q \cos(2 \pi \theta)$ is bounded on $\mathbb{T}$, then the set 
$H^2_G(\mathbb{D})$ is equal to $H^2(\mathbb{D})$ but with a different norm: 
    $$\|g\|^2_{H^2_G(\mathbb{D})} = \int_{\mathbb{T}} |g(\theta)|^2 \varphi(\theta) d\theta = \int_{\mathbb{T}} |g(\theta)|^2 (1 + 2 q \cos(2 \pi \theta))  d\theta.$$
Therefore with this norm, $H^2(\mathbb{D})$ is the reproducing kernel Hilbert space given by the kernel
          $$K_G(z, w) = \frac{\psi(z) \overline{\psi(w)}}{1 - z \overline{w}} $$ where the function $\psi(z)$ is given above.

\subsection{Inverse of the Kac-Murdock-Szeg\"o matrix}
The same property is also observed for  the classical Kac-Murdock-Szeg\"o matrix.  It is the Toeplitz matrix $G$ defined for a complex number $q$ by 
        $$ (G)_{k,j} =   q^{|k-j|}.$$ 
For $q$ real with $|q| < 1$, the spectral density function $\varphi$ of $G$ is given by
       $$\varphi(\theta) = \sum_{n=-\infty}^\infty q^{|n|} e^{2\pi i n \theta} = \frac{1 - q^2}{1 - 2q \cos(2 \pi \theta) + q^2}. $$
(Here $\varphi(\theta)$ can be seen as the classical Poisson kernel in the unit disc.)
 Under these conditions ($q$ real and $|q|< 1$), $G$ is symmetrical and invertible and its inverse is well-known and given by:
    \begin{eqnarray*}
\left(G^{-1}\right)_{k,j}  = \left\{ \begin{array} {ccc}
            (1-q^2)^{-1} & \mbox{ if } k=j=1\\
 (1+q^2)(1-q^2)^{-1} & \mbox{ if } k=j\geq 2\\
 -q (1-q^2)^{-1} & \mbox{ if } |k-j| = 1\\
 0 & \mbox{ otherwise.} 
 \end{array}
 \right.
 \end{eqnarray*}
(More details on the spectral properties of Kac-Murdock-Szeg\"o matrix for a complex parameter are given in Fikioris \cite{Fikioris}.)
 Then clearly the corresponding kernel is 
$$\mathbb{K}_G(z, w) = \frac{(1- q z)(1 - q \overline{w})}{(1-q^2)(1 - z \overline{w})} = \frac{\psi(z) \overline{\psi(w)}}{1 - z \overline{w}} $$ 
  where $$\psi(z) = \frac{1- q z}{\sqrt{1-q^2}}.$$  
In general for $q \in \mathbb{C}$ with $|q| < 1$, let
    \begin{eqnarray*}
G_{k,j}  = \left\{ \begin{array} {ccc}
            q^{|k-j|} \mbox{ if } k \geq j \\
            (\overline{q})^{|k-j|} \mbox{ otherwise. }
 \end{array}
 \right.
 \end{eqnarray*}  
Then $G$ is hermitian and invertible and its inverse is given by
      \begin{eqnarray*}
\left(G^{-1}\right)_{k,j}  = \left\{ \begin{array} {ccc}
            (1-|q|^2)^{-1} & \mbox{ if } k=j=1\\
 (1+|q|^2)(1-|q|^2)^{-1} & \mbox{ if } k=j\geq 2\\
 -q (1-|q|^2)^{-1} & \mbox{ if } k-j = 1\\
 -\overline{q}(1-|q|^2)^{-1} & \mbox{ if } k - j = -1\\
 0 & \mbox{ otherwise.} 
 \end{array}
 \right.
 \end{eqnarray*}
This yields 
 $$\mathbb{K}_G(z, w) = Z G^{-1} \overline{W} = \frac{(1- q z)(1 - \overline{q} \overline{w})}{(1-|q|^2)(1 - z \overline{w})} = \frac{\psi(z) \overline{\psi(w)}}{1 - z \overline{w}} $$ 
  where $$\psi(z) = \frac{1- q z}{\sqrt{1-|q|^2}}.$$ 
\subsection{Inverse fractional Gaussian noise} 
Given $0 < h < 1$, the classical complex fractional Gaussian noise of Hurst index $h$  is a sequence $\{\Delta_n\}_{n=1}^\infty$ of centred Gaussian random variables such that $\mathbb{E}(\Delta_n \Delta_m) = 0$ and with covariance structure
                \begin{eqnarray} \label{ew32wswaws2121}
\gamma(k) := \mathbb{E}(\Delta_n \overline{\Delta_{n+k}}) = {\scriptstyle\frac{1}{2}}|k+1|^{2h} + {\scriptstyle\frac{1}{2}}|k-1|^{2h} -|k|^{2h},\,\,\,\, k, n\in \mathbb{N}.
\end{eqnarray}
The covariance matrix of $\{\Delta_n\}_{n=0}^\infty$ is the Toeplitz matrix $G$ given by
     $$G = \left(\gamma(k-j)\right)_{k,j=1}^\infty.$$ 
(The particular case $h = 1/2$ corresponds i.i.d random variables.)
It is well-known that the matrix $G$ is invertible (see for example \cite{Dambrogi-ola}.) Unfortunately an explicit inverse of $G$ is not known. Consider its inverse matrix $G^{-1}$. A sequence of Gaussian random variables with covariance matrix $G^{-1}$ shall be called the inverse fractional Gaussian noise of index $h$.  
Some properties of the zeros of the random polynomial $\sum_{k=0}^n \Delta_k x^k$ and the power series $\sum_{k=0}^{\infty} \Delta_n x^n$ where $(\Delta_n)$ is the fractional Gaussian noise are given in \cite{Mukeru_2018} and \cite{Mukeru_al}. Here we are interested in the function $f(z) = \sum_{n=1}^\infty \xi_n z^{n-1}$ where $(\xi_n)$ is the inverse fractional Gaussian noise. 
It is well-known (using an argument by Sinai \cite[Theorem 2.1]{Sinai}) that the matrix $G$ admits a spectral density function $\varphi_h$ given by
   \begin{eqnarray} \label{dsdfesw23}
\varphi_h(\theta) = C(h)|e^{2 \pi i \theta}-1|^2\left(\sum_{n=-\infty}^\infty \frac{1}{|\theta +n|^{2h+1}}\right),\, \theta \in \mathbb{T}, \theta \ne 0
\end{eqnarray}
       where $C(h)$ is a normalising constant given by
        $$C(h) = -\frac{\zeta(-2h)}{2 \zeta(1+2h)}$$ where $\zeta(.)$ is the Riemann zeta function. 
Clearly, 
\begin{eqnarray} \label{dsdfesw2312s}
\varphi_h(\theta) &=& 4 C(h) \left(\sin^2 \pi \theta\right) \sum_{n=0}^\infty \left(\frac{1}{(n+\theta)^{2h+1}} + \frac{1}{(n+ 1-\theta)^{2h+1}}\right) \nonumber\\
           & = &4 C(h) \left(\sin^2 \pi \theta\right) (\zeta(2h+1, \theta) + \zeta(2h+1, 1-\theta))
\end{eqnarray}
 where $\zeta(.,.)$ is the classical Hurwitz zeta function. 

It is not difficult to see that the function $\varphi_h$ is continuous on $(0, 1)$ and satisfies 
    \begin{eqnarray*} \label{sddsdwdwwsas}
\varphi_h(t)  = O(t^{1-2h} (1-t)^{1-2h}),\,\, \mbox{ for } t \mbox{ near } 0 \mbox{ or } 1.
\end{eqnarray*}
This implies that both functions $\varphi(t)$ and $1/\varphi(t)$ are integrable on the unit circle. The inverse matrix $G^{-1}$ is therefore such that 
     $$\left(G^{-1}\right)_{k,j} =  \int_{\mathbb{T}} \frac{e^{-2\pi i (k-j) t}}{\varphi_h(t)} dt,\,\,\mbox{ for } j+ k \to \infty.$$
(See D'Ambrogi-Ola \cite{Dambrogi-ola}.)
This yields that $G^{-1}$ is asymptotically a Toeplitz matrix in the sense that for each $k,j$ fixed,
         $$\lim_{n\to \infty} \left(G^{-1}\right)_{k+n,j+n} = \int_{\mathbb{T}} \frac{e^{-2\pi i (k-j) t}}{\varphi_h(t)} dt = \widehat{\left(1/\varphi_h\right)} (k-j).$$
This implies in particular that $\left(G^{-1}\right)_{k,j} \to 0$ for $k+j \to \infty$ and hence 
$\sup_{k,j}|(G^{-1})_{k,j}| < \infty$ which guarantees that 
for each $z, w \in \mathbb{D}$ the series $Z^{T} G^{-1} \overline{W}$ converges (for $Z = (1, z, z^2, \ldots)$  and $W = (1, w, w^2, \ldots))$. 
The corresponding space $H^2_G(\mathbb{D})$ is the class of functions $ g \in H^2(\mathbb{D})$ such that
  $$\int_{\mathbb{T}} \left|g(e^{2 \pi i \theta})\right|^2 \varphi_h(\theta) d\theta < \infty.$$ 
For $h$ varying in $(0,1)$, this yields a family of sub-spaces of the Hardy space $H^2(\mathbb{D})$. 
 The exact entries of the matrix $G^{-1}$ are not known and therefore we do not have an explicit representation  of the kernel $\mathbb{K}_G(z, w)$ as  in the first two examples. However Theorem \ref{mainth} yields that if $(\xi_n)_{n\in \mathbb{N}}$ is a zero-mean complex Gaussian sequence of covariance matrix $G^{-1}$ and zero pseudo-covariance, then the zeros of $f(z) = \sum_{n =1}^\infty \xi_n z^{n-1}$ constitute a determinantal point process. 
 
As in the general case one can compute from the sequence of polynomials $1, z, z^2, \ldots$ a sequence of  orthonormal polynomials $\{P_n(z): n=1,2,\ldots\}$ and deduce that if $(\chi_n)$ is a sequence of i.i.d standard Gaussian random variables then the zeros of   $f(z) = \sum_{n=1}^\infty \chi_n P_n(z)$ constitute a determinantal point process. 
      
In the limit case where $h = 0$, the fractional Gaussian noise with index $h$ is such that the covariance matrix $G$ is given by 
      $$G_{k,k} = 1, G_{k,k+1} = G_{k+1, k} = -1/2 \mbox{ and } G_{k,j} = 0 \mbox{ for } |k-j| \geq 2,$$ and  it is not invertible. However it still determines a determinantal point process.  
The spectral density function of $G$ is 
           $$\varphi_0(\theta) = 1 - \cos(2\pi \theta),\,\, 0 \in \mathbb{T}.$$
 From the sequence of polynomials  $(1, z, z^2, z^3, \ldots)$, we derive the orthonormal sequence: 
  \begin{eqnarray*}
  P_n(z) = \left(\frac{2}{n(n+1)}\right)^{1/2}(1 + 2 z + 3 z^2 + \ldots+ n z^{n-1}),\,\,\,n = 1,2,\ldots
 \end{eqnarray*}
In this case, the kernel of $\mathbb{H}^2_G(\mathbb{D})$ is explicitly given by:
\begin{eqnarray*} \label{Saf_Kernel}
\mathbb{K}_0(z, w) = \sum_{n=1}^\infty P_n(z) \overline{P_n(w)} = \frac{2}{(1-z)(1-\overline{w})(1- z \overline{w})},\,\,\,z, w \in \mathbb{D}.
 \end{eqnarray*}
 This is exactly the limit case of the kernel given in Example \ref{exampl1} when the parameter $q$ approaches $ -1/2$. 
Then for a sequence $(\chi_n)$ of i.i.d standard Gaussian variables, the zeros of 
   \begin{eqnarray*}
   f(z) = \sum_{n=1}^\infty \chi_n P_n(z),\,\,\,\, z \in \mathbb{D}
   \end{eqnarray*}
constitute a determinantal point process. 


\end{document}